\DeclareSymbolFontAlphabet{\mathbbm}{bbold}
\DeclareSymbolFontAlphabet{\mathbb}{AMSb}
\newtheorem{thm}{Theorem}[section]
\newtheorem{prop}[thm]{Proposition}
\def\J{\mathbbm{\Gamma}}
\newcommand{\ZZ}{\mathbb{Z}}
\newcommand{\RR}{\mathbb{R}}
\newcommand{\QQ}{\mathbb{Q}}
\newcommand{\NN}{\mathbb{N}}
\newcommand{\cM}{\mathcal{M}}
\newcommand{\cQ}{\mathcal{Q}}
\newcommand{\on}{\operatorname}
\newcommand{\Gr}{\operatorname{Gr}}
\newcommand{\ord}{\on{ord}}
\newcommand{\double}{\genfrac..{0pt}1
{\raise -2pt\hbox{$\scriptstyle\longrightarrow$}}{\raise 4pt\hbox
{$\scriptstyle\longrightarrow$}}}
\newcommand{\sgn}{\operatorname{sgn}}
\newcommand{\Kgn}{K^{(g,n)}}  
\newcommand{\sss}{\alpha}
\newcommand{\ttt}{\beta}
\newcommand{\uuu}{\gamma}
\newcommand{\hide}[1]{}
\newtheorem{Definition}[thm]{Definition}
\newenvironment{definition}
  {\begin{Definition}}{\end{Definition}}
\newtheorem{Example}[thm]{Example}
\newenvironment{example}
  {\begin{Example}\rm}{\end{Example}}
\newtheorem{Notation}[thm]{Notation}
\newtheorem{Fact}[thm]{Fact}
\newenvironment{fact}
  {\begin{Fact}\rm}{\end{Fact}}
\newtheorem{Theorem}[thm]{Theorem}
\newenvironment{theorem}
  {\begin{Theorem}}{\end{Theorem}}
\newtheorem{Lemma}[thm]{Lemma}
\newenvironment{lemma}
  {\begin{Lemma}}{\end{Lemma}}
\newtheorem{Remark}[thm]{Remark}
\newenvironment{remark}
  {\begin{Remark}\rm}{\end{Remark}}
\newtheorem{Proposition}[thm]{Proposition}
\newenvironment{proposition}
  {\begin{Proposition}}{\end{Proposition}}
\newtheorem{Corollary}[thm]{Corollary}
\newenvironment{corollary}
  {\begin{Corollary}}{\end{Corollary}}
\newtheorem{Question}[thm]{Question}
\newenvironment{question}
  {\begin{Question}\rm}{\end{Question}}
\newtheorem{Conjecture}[thm]{Conjecture}
\newenvironment{conjecture}
  {\begin{Conjecture}\rm}{\end{Conjecture}}
\newcommand \defnow[1]{\begin{definition}{#1}\end{definition}}
\newcommand \exnow[1]{\begin{example}{#1}\end{example}}
\newcommand \lemnow[1]{\begin{lemma}{#1}\end{lemma}}
\newcommand \proofnow[1]{\begin{proof}{#1}\end{proof}}
\newcommand \propnow[1]{\begin{proposition}{#1}\end{proposition}}
\newcommand \cornow[1]{\begin{corollary}{#1}\end{corollary}}
\newcommand \enumnow[1]{\begin{enumerate}{#1}\end{enumerate}}
\newcommand{\Aut}{\on{Aut}}
  \theoremstyle{definition} 
\theoremstyle{remark}
\title{The $S_n$-equivariant top weight Euler characteristic of $\cM_{g,n}$}
\author[M. Chan]{Melody Chan}\address{Department of Mathematics, Brown University, Box
1917, Providence, RI 02912, USA}\email{melody\_chan@brown.edu}
\author[C. Faber]{Carel Faber}\address{Department of Mathematics, Utrecht University, PO Box 80010, 3508 TA Utrecht, The Netherlands}\email{C.F.Faber@uu.nl}
\author[S. Galatius]{S{\o}ren Galatius} \address{Department of Mathematical Sciences, University of Copenhagen, Universitetsparken 5, DK-2100 Copenhagen, Denmark}\email{galatius@math.ku.dk}
\author[S. Payne]{Sam Payne}\address{UT Department of Mathematics, 2515 Speedway, RLM 8.100, Austin, TX 78712, USA}\email{sampayne@utexas.edu}
\date{\today}
\begin{document}

\begin{abstract}
We prove a formula, conjectured by Zagier, for the $S_n$-equivariant Euler characteristic of the top weight cohomology of $\cM_{g,n}$.
\end{abstract}

\maketitle

\section{Introduction}

Fix an integer $g\ge 2$.  For each $n\ge 0$, let $\cM_{g,n}$ denote the moduli space of complex algebraic curves of genus $g$ with $n$ distinct marked points.  The cohomology $H^*(\cM_{g,n};\QQ)$ carries a weight filtration supported in degrees from $0$ to $2d$, where $d = 3g-3+n$ is the dimension of $\cM_{g,n}$.  The group $S_n$ acts by permuting the marked points.  Our main result is a formula for the $S_n$-equivariant Euler characteristic of the induced action on the top weight cohomology $\Gr_{2d}^W H^*(\cM_{g,n};\QQ)$.  It may be stated as follows.

For any partition $\lambda \vdash n$, we have the corresponding irreducible representation $V_\lambda$ of $S_n$, and the Schur function $s_\lambda$ in the ring of symmetric functions $\Lambda = \varprojlim_n \QQ[x_1, \ldots, x_n]^{S_n}$.  Decompose the top weight cohomology of $\cM_{g,n}$ into irreducible representations as
\begin{equation*}
\Gr_{2d}^W H^i(\cM_{g,n};\QQ) \cong \bigoplus_{\lambda\,\vdash\, n} c^{i}_\lambda\,V_\lambda.
\end{equation*}
Then the generating function for the $S_n$-equivariant top weight Euler characteristic of $\cM_{g,n}$ in the ring of formal symmetric power series $\widehat \Lambda = \varprojlim_n \QQ\llbracket x_1, \dots, x_n\rrbracket^{S_n}$ is
\begin{equation*}
z_g = \sum_{i, \lambda} (-1)^i c^i_\lambda s_\lambda;
\end{equation*}
it simultaneously encodes the integers $\sum_i (-1)^i c^i_\lambda$ for all $n$ and all $\lambda \vdash n$.

This formula for $z_g$ is most conveniently expressed in terms of the inhomogeneous power sum functions $P_i = 1 + p_i$, where $p_i = \sum_j x_j^i$.  Let $B_r \in \QQ$ denote the Bernoulli number, characterized by $\frac{t}{e^t - 1} = \sum_{r=0}^\infty B_r \frac{t^r}{r!}$, and let $\mu$ be the M\"obius function.

\begin{theorem}\label{thm:faber-conj}
The $S_n$-equivariant top weight Euler characteristic of $\cM_{g,n}$ is
\begin{equation} \label{eq:main}
z_g = \!\!\!\! \sum_{k,m,r,s,a,d} 
\frac{(-1)^{k-r}(k-1)!B_r}{r!}
m^{r-1}\cdot \!\!\!\prod_{p|(m,d_1,\ldots,d_s)} \left( 1-\frac{1}{p^r}\right) 
\frac{1}{P_m^k}\prod_{i=1}^s \frac{\mu(m/d_i)^{a_i} P_{d_i}^{a_i}}{a_i!},
\end{equation}
where the sum is over integers $k,m>0$ and $r,s\ge 0$, and $s$-tuples of positive integers $a = (a_1,\ldots,a_s)$ and $d = (d_1,\ldots,d_s)$, such that
\begin{eqnarray}
0<d_1<\cdots<d_s<m, \mbox{ \ \ and \ \ } d_i \,|\, m; \label{eq:di-divides}\\
a_1+\cdots+a_s + r = k+1; \label{eq:chi-downstairs}\\
a_1d_1+\cdots+a_sd_s+g-1 = km,\label{eq:chi-upstairs}
\end{eqnarray}
and the product runs over primes $p$ dividing $m$ and all $d_1, \dots, d_s$.
\end{theorem}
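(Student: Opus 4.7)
The plan is to reduce the problem to a cell-by-cell Euler-characteristic computation on the tropical moduli space $\Delta_{g,n}$, and then reorganize the resulting sum by cyclic automorphism data to match \eqref{eq:main}. The starting point is the theorem of Chan--Galatius--Payne identifying $\Gr^W_{2d} H^*(\cM_{g,n};\QQ)$, up to a shift and a sign-representation twist, with the reduced rational homology of $\Delta_{g,n}$. This identification is $S_n$-equivariant, so after taking the Frobenius characteristic, the problem becomes the computation of the equivariant reduced Euler characteristic of $\Delta_{g,n}$ as an element of $\wh\Lambda$.

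Next, I would exploit the fact that $\Delta_{g,n}$ is a symmetric $\Delta$-complex whose cells correspond to isomorphism classes of stable weighted graphs of genus $g$ with $n$ legs. Expanding cell-by-cell and summing over all $n$ produces a raw formula for $z_g$ as a sum over isomorphism classes of triples $(G,w,\sigma)$, where $(G,w)$ is a stable weighted graph and $\sigma\in\Aut(G,w)$; each triple contributes a sign times $1/|\Aut(G,w,\sigma)|$ times a power-sum monomial encoding the cycle type of the induced action of $\sigma$ on the legs of $G$.

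To pass from this raw sum to the closed form \eqref{eq:main}, I would reparameterize each triple by its cyclic quotient. If $\sigma$ has order $m$, then $(G,w,\sigma)$ is equivalent to the data of an $m$-fold cyclic cover of the quotient graph $\bar G = G/\langle\sigma\rangle$; that quotient has some first Betti number $r$ and some $k$ edges, and its legs decompose into orbit-types parameterized by divisors $d_i\mid m$ with multiplicities $a_i$. The arithmetic constraints \eqref{eq:di-divides}--\eqref{eq:chi-upstairs} are then Riemann--Hurwitz and Euler-characteristic identities for this cyclic cover. M\"obius inversion on the divisor lattice of $m$ produces the factors $\mu(m/d_i)$ by isolating the primitive covers over each leg-orbit; an inclusion-exclusion over prime-order subgroups of $C_m$ produces the Euler factor $\prod_p(1-1/p^r)$; and the Bernoulli weight $B_r m^{r-1}/r!$ arises from the ``free'' rank-$r$ part of the quotient, in the spirit of the Harer--Zagier computation $\chi(\cM_r)\sim B_{2r}$ and of analogous Bernoulli-type formulas for the Euler characteristic of $\on{Out}(F_r)$.

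The main obstacle is the combinatorial bookkeeping of this last step: disentangling the raw sum over triples $(G,w,\sigma)$ into the explicit parameters $(k,m,r,s,a,d)$ of \eqref{eq:main}, correctly tracking automorphism orders, orientation signs, and the M\"obius and Bernoulli contributions coming from the cyclic-cover reorganization. Verifying the resulting termwise identity, and in particular identifying the ``loop contribution'' with the precise factor $B_r m^{r-1}/r!\cdot\prod_p(1-1/p^r)$, is the combinatorial heart of the proof.
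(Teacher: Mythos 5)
Your proposal follows the same overall strategy as the paper: start from the graph complex of \cite{cgp-marked}, write $z_g$ as a sum over pairs (graph, automorphism), pass to the quotient by the cyclic group generated by the automorphism, and then identify the M\"obius, Euler-product, and Bernoulli factors with contributions of, respectively, tail structure, counting cyclic covers with a connectivity constraint, and Kontsevich's orbifold Euler characteristic of the ``core'' graph. The constraints \eqref{eq:di-divides}--\eqref{eq:chi-upstairs} as graph-theoretic Riemann--Hurwitz are also exactly as the paper formulates them.

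There is, however, a real gap in the middle. You write as if every quotient $G/\langle\sigma\rangle$ already has the shape that appears in \eqref{eq:main}: a ``core'' on which the stabilizer data is constantly $m$, attached to legs labeled by divisors $d_i \mid m$. This is false. A general automorphism $\sigma$ of order $m$ can stabilize internal (non-leg) edges or vertices by proper subgroups, so the quotient (what the paper calls an orbigraph $(X,f)$) can have $f < m$ at places that are not 1-valent and not on any tail. These quotients do not match any $(k,m,r,s,a,d)$ term in your target formula. The paper handles this by a cancellation argument (Section 5): it introduces exhalation/inhalation moves on orbigraphs and shows that, after pushing forward the sign function, contributions from all non-\emph{static} orbigraphs vanish because they organize into Boolean lattices over which $(-1)^{|E(X)|}$ sums to zero. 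Only after this vanishing does the structure theorem (Proposition~\ref{prop:structure-of-static}) guarantee the ``constant away from tails'' shape you take for granted, at which point cropping tails and M\"obius inversion (your final step) applies. Without this cancellation step the reorganization you describe as ``bookkeeping'' does not close up: the sum over quotients is strictly larger than the sum in \eqref{eq:main}, and a genuine argument is needed to eliminate the excess.
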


\noindent The origins of the formula \eqref{eq:main} are discussed in Remark~\ref{rem:history}.
The sum is finite; see Remark~\ref{rem:finite}.  The ordinary (numerical) top weight Euler characteristics of $M_{g,n}$, which were also not previously known, may be deduced from Theorem~\ref{thm:faber-conj}; see Corollary~\ref{cor:numerical}.

\medskip

The starting point for our proof of Theorem~\ref{thm:faber-conj} is a chain complex from \cite{cgp-markedpoints-published}, denoted $K^{(g,n)}$, that computes the top weight cohomology $\Gr_{2d}^W H^*(\cM_{g,n};\QQ)$.  This chain complex is a \emph{graph complex}, whose generators are certain genus $g$ graphs with $n$ distinct marked points and some orientation data; see Section \ref{subsec:Graphs and graph complexes} below.  The $n=0$ case is treated in the previous paper \cite{cgp-graph-homology}, 
where it was used to relate the top weight cohomology of $\cM_g$ to the Grothendieck-Teichm\"uller
Lie algebra.  

The $S_n$-action on the graph complex $K^{(g,n)}$ preserves the subspaces spanned by graphs with a fixed underlying stable graph, i.e., the graph obtained by forgetting all marked points and stabilizing.   
In Section~\ref{sec:sum-over-automorphisms}, we show that the resulting contribution $z_G$ to $z_g$ from each of the finitely many underlying stable graphs $G$ is a signed sum, over the automorphisms of $G$, of monomials in the inhomogeneous power sums $P_\lambda$ and their inverses.  The pairs $(G,\tau)$, where $\tau \in \Aut(G)$, that occur in these sums may be regrouped according to the quotients $G/\tau$, understood as ``orbigraphs" in a sense made precise in Section~\ref{sec:orbigraphs}.  
The contributions of the orbigraphs are analyzed using a series of push-forwards of ``orbi-sums'' over groupoids (Lemma~\ref{lem:orbi-counting}), from which we deduce:
\begin{proposition} \label{prop:orbisum-intro}
$$z_g = \sum_{k,m,r,s,a,d} \int_{\mathrm{OG}^{\mathrm{stat},\mathrm{red}}_{(g,m,r,s,a,d)}} {\mathcal R}_* \sss \cdot \ttt \cdot \uuu,$$
where $k,m,r,s,a,d$ satisfy the conditions in Theorem~\ref{thm:faber-conj}.
\end{proposition}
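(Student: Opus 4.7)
The starting point is the expression $z_g = \sum_G z_G$, where the sum runs over isomorphism classes of underlying stable graphs $G$ of genus $g$ appearing in the marked graph complex of \cite{cgp-marked}, and $z_G$ denotes the contribution to the equivariant top weight Euler characteristic coming from graphs whose underlying stable graph is $G$. The main result of Section~\ref{sec:sum-over-automorphisms} will expand each $z_G$ as a signed sum of explicit monomials in the inhomogeneous power sums $P_i$ and their inverses, indexed by $\tau \in \Aut(G)$ and weighted by $1/|\Aut(G)|$. Rewriting the double sum $\sum_G \frac{1}{|\Aut(G)|} \sum_{\tau} (-)$ as an integral over the groupoid of pairs $(G,\tau)$ reinterprets $z_g$ as a single orbi-sum, to which the orbigraph machinery can be applied.

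Next I use the quotient construction of Section~\ref{sec:orbigraphs} to convert the groupoid of pairs $(G,\tau)$ into a groupoid whose objects are the orbigraphs $G/\tau$ together with the cyclic-cover datum. Each such orbigraph carries a list of discrete invariants: the order $k$ of $\tau$, the integer $m$ controlling the stabilizer of the generic edge/vertex, the divisors $d_1<\cdots<d_s$ of $m$ recording ramification, the multiplicities $a_i$ of edges/vertices with ramification $d_i$, and an integer $r$ recording the stationary (vertex-fixed) data. These invariants give a decomposition
$$
\mathrm{OG}^{\mathrm{stat},\mathrm{red}} \;=\; \coprod_{(k,m,r,s,a,d)} \mathrm{OG}^{\mathrm{stat},\mathrm{red}}_{(g,m,r,s,a,d)}.
$$
The divisibility condition~\eqref{eq:di-divides} is built into the definition of the ramification type; the identities~\eqref{eq:chi-downstairs} and~\eqref{eq:chi-upstairs} then appear as the Euler characteristic of the quotient orbigraph and as the Riemann--Hurwitz formula relating the genus $g$ of the cover to the orbifold Euler characteristic downstairs, respectively.

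The bulk of the argument is then an application of Lemma~\ref{lem:orbi-counting} to the forgetful functor $(G,\tau) \mapsto G/\tau$. The fiber over an orbigraph is the data of a cyclic cover, and pushing the local monomial contributions through this forgetful map produces the factor $\mathcal{R}_*\sss$ of the integrand. The remaining two factors $\ttt$ and $\uuu$ collect, respectively, the local per-edge and per-vertex monomial weights determined by the orbigraph combinatorics, and the groupoid-theoretic corrections (stabilizer orders of the orbigraph itself, the Bernoulli- and M\"obius-type constants coming from averages over the cyclic cover group). Assembling these three factors into a single orbi-integral over $\mathrm{OG}^{\mathrm{stat},\mathrm{red}}_{(g,m,r,s,a,d)}$ and summing over the discrete index produces the stated formula.

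The main obstacle, as I see it, is the orbi-counting bookkeeping in the final pushforward: ensuring that the stacky factors $1/|\Aut|$ from the $(G,\tau)$-side combine correctly with the stacky factors from the orbigraph-side after applying Lemma~\ref{lem:orbi-counting}, that the sign conventions from the graph complex differential survive the quotient, and that the separate contributions from ramified edges, unramified edges, stationary vertices, and moving vertices collect cleanly into the three-factor integrand $\mathcal{R}_*\sss \cdot \ttt \cdot \uuu$. No single step is deep, but the accounting must be done carefully, since a successful identification at this stage is precisely what enables the later extraction of the closed-form expression~\eqref{eq:main} from the orbi-sum.
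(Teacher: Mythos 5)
Your proposal correctly identifies the opening move (Proposition~\ref{prop:start}, rewriting $z_g$ as an orbi-sum over pairs $(G,\tau)$) and the closing move (the decomposition of $\mathrm{OG}^{\mathrm{stat},\mathrm{red}}_g$ into pieces indexed by $(k,m,r,s,a,d)$, which is Lemma~\ref{lem:stat-red-coprod}). But between those endpoints you jump straight from ``the groupoid of orbigraphs $G/\tau$'' to ``$\mathrm{OG}^{\mathrm{stat},\mathrm{red}}$,'' and that jump is exactly where the proposition lives. After passing to orbigraphs one has $z_g = \int_{\OG_g} \sss\cdot\ttt\cdot\uuu$ (Corollary~\ref{cor:s-t-u}), an orbi-sum over \emph{all} stable connected orbigraphs, of which there are infinitely many and whose values of $f$ need not be constant away from tails. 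To restrict to static reduced orbigraphs requires two separate pushforwards, each with its own content: first, pushing forward along the maximal-exhalation functor $\mathrm{Ex}$ and invoking the sign cancellation $\sum_{S\subset\mathrm{Inh}(X,f)}(-1)^{|S|}=0$ (Lemma~\ref{lem:fiber-of-max-exh} and the corollary after it) to kill every orbigraph that is not static; second, pushing forward along the cropping functor $\mathcal{R}$ and using that $\ttt$ and $\uuu$ are $\mathcal{R}$-pullbacks (Lemma~\ref{lem:t-u-factor}) so that by the projection formula~\eqref{eq:13} one gets $\mathcal{R}_*(\sss\cdot\ttt\cdot\uuu)=(\mathcal{R}_*\sss)\cdot\ttt\cdot\uuu$. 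Your proposal calls all of this ``bookkeeping,'' but the inhalation/exhalation cancellation is the essential finiteness mechanism without which the right-hand side of the proposition would not equal $z_g$; it is not a routine check.

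Two smaller inaccuracies: you write $\mathcal{R}_*\sss$ as arising from ``the forgetful map $(G,\tau)\mapsto G/\tau$,'' but that map is the quotient functor $\mathcal{O}$; the $\mathcal{R}$ appearing in the statement is the tail-cropping functor $\OG_g^{\mathrm{stat}}\to\OG_g^{\mathrm{stat},\mathrm{red}}$, and confusing the two obscures why $\mathcal{R}_*\sss$ (and not $\sss$) shows up. You also interchange the roles of $k$ and $m$: $m$ (not $k$) is the order of $\tau$ and the constant value of $f$ away from tails, while $r$ is the genus of the ordinary graph $X$ underlying the orbigraph (i.e.\ $\chi(X)=1-r$), not ``vertex-fixed data.''
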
 
\noindent The integral denotes an orbi-sum over the groupoid $\mathrm{OG}^{\mathrm{stat},\mathrm{red}}_{(g,m,r,s,a,d)}$ defined in Lemma~\ref{lem:stat-red-coprod}, whose objects are {\em static} and {\em reduced} orbigraphs 
with associated numerical parameters $g,m,r,s,a,d$.  The terms ${\mathcal R}_* \sss$, $\ttt$, and $\uuu$ are functions on reduced static orbigraphs defined in Corollary~\ref{cor:s-t-u} and \ref{def:reduction-functor}.  The contributions of the three terms are calculated in Proposition~\ref{prop:u}, Lemma~\ref{lem:s}, and Lemma~\ref{lem:orbifold-euler}, whence the formula of Theorem~\ref{thm:faber-conj} follows.
\begin{remark}
Each numerical parameter in Theorem~\ref{thm:faber-conj} and Proposition~\ref{prop:orbisum-intro} has a natural graph theoretic interpretation, e.g., if the orbigraph $G/\tau$ is in $\mathrm{OG}^{\mathrm{stat},\mathrm{red}}_{(g,m,r,s,a,d)}$, then $g$ is the genus of $G$, $m$ is the order of $\tau$, and $r$ is the genus of the ordinary graph underlying $G/\tau$.  See Sections~\ref{sec:orbigraphs}--\ref{sec:static-structure} for details.  
\end{remark}

Similar arguments give the analogous generating functions $z_0$ and $z_1$ (summing over $n \geq 3$ and $n \geq 1$, respectively); these look slightly different from the higher genus cases.  
  \begin{proposition}\label{prop:genus0}
    $$z_0 = -P_1 \sum_{d\ge 1} \frac{\mu(d)}{d} \log P_d  +\frac{1}{2}(P_1^2-P_2).$$
  \end{proposition}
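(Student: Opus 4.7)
The plan is to adapt the proof strategy of Theorem~\ref{thm:faber-conj} to the graph complex of \cite{cgp-marked} computing the top weight cohomology of $\cM_{0,n}$ for $n\ge 3$. In genus~$0$, every generator of this complex is a stable tropical tree with $n$ distinct marked legs, and every quotient of such a tree by a graph automorphism is itself a tree orbigraph. Consequently the genus parameter $r$ of the quotient graph is identically zero, and the Bernoulli-number factors of Theorem~\ref{thm:faber-conj} collapse to trivial contributions.

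First I would rerun the sum-over-automorphisms analysis of Section~\ref{sec:sum-over-automorphisms} for trees. For each stable tree $T$ with marked legs and each $\tau\in\Aut(T)$ of order $m$, the classical center-of-tree lemma says that $\tau$ fixes either a unique vertex or the midpoint of a unique edge. In either case, the contribution of $(T,\tau)$ factors into a central contribution at this fixed locus and a product of contributions of branches emanating from it; branches appear in $\tau$-orbits, and a branch orbit whose stabilizer has order $d\mid m$ contributes via the plethystic substitution $p_k\mapsto p_{dk}$.

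Second, summing over all choices of branches at the center produces an exponential generating function in these plethystic variables. Extracting the connected-branch contribution and passing to the sum over orders $m$ via Möbius inversion yields the factor $\sum_{d\ge 1}\tfrac{\mu(d)}{d}\log P_d$; the prefactor $-P_1$ accounts for the marking and orientation data at the central vertex. This is the genus~$0$ analog of the Klyachko--Hanlon formula for the $S_n$-character of the Lie operad, modified to use the inhomogeneous power sums $P_d=1+p_d$ because the CGP complex allows additional marked legs at each vertex of the tree.

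Finally, the polynomial correction $\tfrac{1}{2}(P_1^2-P_2)$ captures the base cases of the exp--log recursion that are not counted by the logarithmic term: specifically, the single-vertex tree (corresponding to $\cM_{0,3}$, a point) and the one-edge tree whose two endpoints are swapped by a $\ZZ/2$-automorphism exchanging their legs. The main obstacle will be the sign and orientation bookkeeping in these low-valence degenerate cases, together with the plethystic manipulations needed to identify the discrete orbi-sum over tree automorphisms with the closed-form $\log P_d$ expression --- essentially the analog of Proposition~\ref{prop:u} and Lemmas~\ref{lem:s}--\ref{lem:orbifold-euler}, substantially simplified by the absence of cycles but complicated by having to treat the central vertex/edge as a separate combinatorial base case.
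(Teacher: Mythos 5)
Your approach is genuinely different from the paper's. The paper does not run the orbi-sum machinery of Sections~\ref{sec:sum-over-automorphisms}--\ref{sec:conclusion} at all for genus $0$. Instead, it uses the Robinson--Whitehouse theorem that the $S_n$-module $\widetilde{H}_{n-4}(\Delta_{0,n};\QQ)\cong \Gr_{2d}^W H^{n-3}(\cM_{0,n};\QQ)$ is $\epsilon\cdot(\mathrm{Ind}^{S_n}_{S_{n-1}}\mathrm{Lie}_{n-1}-\mathrm{Lie}_n)$, computes Frobenius characteristics of Lie modules directly via Lemma~\ref{lem:sundaram} and the sign-twist involution $p_a\mapsto(-1)^{a-1}p_a$, and then sums over $n\ge3$, reindexing to expose the $\log$. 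The M\"obius function and $\log$ appear because $\mathrm{ch}(\mathrm{Lie}_n)=\frac1n\sum_{d\mid n}\mu(d)p_d^{n/d}$; there is no tree-center decomposition anywhere.

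Your plan of summing over tree automorphisms anchored at the tree center is a reasonable idea, and you are right that some replacement is needed for Proposition~\ref{prop:start}: the groupoid $\J_0^\circ$ is \emph{empty} (no stable unmarked genus-$0$ graphs exist), so the ``forget all markings'' functor of~\eqref{eq:5} has nowhere to land. However, several steps as proposed are either unjustified or incorrect. The claim that the central contribution produces the prefactor $-P_1$ is asserted without computation; so is the handling of edge-centered automorphisms, which is an entire sub-family of terms (automorphisms of even order flipping the central edge), not a single degenerate case. Most concretely, your interpretation of the correction $\tfrac12(P_1^2-P_2)=p_1+\tfrac12p_1^2-\tfrac12p_2$ is wrong: this expression lives in degrees $1$ and $2$, whereas the trees you name (the tripod for $\cM_{0,3}$, the single-edge tree for $\cM_{0,4}$) would contribute in degrees $3$ and $4$. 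In the paper's proof, $p_1+\tfrac12(p_1^2-p_2)$ arises purely as the correction needed after switching the order of summation between $d$ and $n$, to cancel the degree $0,1,2$ garbage that the $\log$ series produces; since $\cM_{0,n}$ is not defined for $n\le 2$, $z_0$ must vanish in those degrees. So your ``base cases of the recursion'' explanation does not hold up. To turn your plan into a proof you would need to (i) set up the replacement push-forward (e.g.\ forgetting markings but remembering the center), (ii) carry out the plethystic bookkeeping to show the branch sum closes to $\sum_d\tfrac{\mu(d)}{d}\log P_d$, and (iii) rederive, rather than assert, the central factor and the low-degree correction.
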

  \begin{proposition}\label{prop:genus1}
    $$z_1 = -\frac{1}{2} \sum_{d\ge 1} \frac{\phi(d)}{d} \log P_d-\frac{1}{4}\frac{P_1^2}{P_2}+P_1-\frac{3}{4}.$$
  \end{proposition}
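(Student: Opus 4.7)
The plan is to adapt the proof strategy of Theorem~\ref{thm:faber-conj} to the special case $g=1$. First I would enumerate the underlying stable graphs of genus 1. A stable graph of genus 1 satisfies $E = V$ (from $b_1 = 1$), and the stability condition $2E \geq 3V$ then forces $V \leq 0$, showing that no stable genus-1 graph exists without marked legs. Consequently $z_1$ is a sum over $n \geq 1$, and the stable graphs that contribute are cycles $C_m$ of length $m \geq 1$ (with marked legs attached so that each vertex has valence at least 3), together with a short list of exceptional small configurations, most notably the self-loop vertex with a marked leg.

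The generic family contribution comes from cycles. I would compute $z_{C_m}$ as a signed sum over $\Aut(C_m) \supseteq D_{2m}$ and then reorganize by the quotient orbigraph $C_m/\tau$ following Sections~\ref{sec:sum-over-automorphisms} and \ref{sec:orbigraphs}. The cyclic subgroup of rotations in $D_{2m}$ contains $\phi(d)$ elements of order $d$ for each divisor $d$ of $m$, and each such rotation has quotient orbigraph $C_{m/d}$ with uniform $\mathbb{Z}/d$-stabilizer. Placing marked legs on the $k = m/d$ quotient vertices produces, after summation, the factor $\sum_{k \geq 1}\frac{(-1)^{k-1}}{k}p_d^k = \log P_d$; combined with $\frac{1}{2m}$ from $|D_{2m}|$ and the $\phi(d)$ count, this assembles into $-\tfrac12 \sum_{d \geq 1}\frac{\phi(d)}{d}\log P_d$, the overall sign being dictated by the orientation data of the graph complex.

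The reflection elements of $D_{2m}$ have order 2 and fix either two vertices or two edges of $C_m$; their quotient orbigraph is a path with $\mathbb{Z}/2$-stabilizer along the interior edges. The legs fixed by a reflection produce a factor $P_1^2$ corresponding to the two boundary positions of the path, while the swapped pairs of legs contribute an infinite series in $P_2$, combining into $-\tfrac14 \, P_1^2/P_2$. The remaining terms $+P_1 - \tfrac34$ arise from the exceptional small graphs that fall outside the uniform cycle analysis. The main obstacle I expect is precisely the careful handling of these exceptional cases: the uniform orbi-sum formulas developed for $g \geq 2$ degenerate at low complexity, and verifying the precise rational constants together with the interaction between rotation, reflection, and small-graph contributions requires a delicate case-by-case analysis, in the same spirit as the bookkeeping needed for Proposition~\ref{prop:genus0}.
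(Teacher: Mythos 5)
Your proposal takes a genuinely different route from the paper's, and some of the intermediate reasoning does not hold up. The paper's proof does not adapt the orbigraph machinery of Sections~\ref{sec:sum-over-automorphisms}--\ref{sec:static-structure}. Instead, it cites \cite[Theorem 1.2]{cgp-marked} to identify $\widetilde H_{n-1}(\Delta_{1,n};\QQ)$ with the induced representation $\mathrm{Ind}_{D_n,\phi}^{S_n}\mathrm{Res}^{S_n}_{D_n,\rho}\mathrm{sgn}$, concentrated in a single degree, and then computes the Frobenius characteristic directly via Lemma~\ref{lem:sundaram}, splitting the sum over $D_n$ into rotations (producing the $\log$ term) and the three families of reflections (producing the $P_1^2/P_2$ term), with the constants $P_1-\tfrac34$ arising as corrections for the fact that $\Delta_{1,1}$ and $\Delta_{1,2}$ are contractible and so contribute zero.

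Concretely: (1)~Proposition~\ref{prop:start} cannot be invoked for $g=1$: its proof rests on the forget-and-stabilize functor $u\colon \J^\circ_{g,n}/S_n\to\J^\circ_g$, and for $g=1$ the target is empty, because smoothing all $2$-valent vertices of a genus-$1$ graph is impossible. You would need to construct and justify a genus-$1$ replacement of Proposition~\ref{prop:start}, which the paper never does. (2)~You are also missing the structural observation that makes the genus-$1$ case tractable: stability together with injectivity of the marking forces every object of $\J^\circ_{1,n}$ to be exactly the $n$-gon with a bijective marking (each vertex has valence exactly $2$ and must carry a marking), so the chain complex $K^{(1,n)}$ is concentrated in a single degree. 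There is no tower of subdivisions or attached trees, and the $\log P_d$ series does \emph{not} come from ``placing marked legs'' on a fixed quotient cycle; it arises from summing over $n=kd$ at fixed $d$, precisely as in the paper's regrouping of the first term. (3)~Your attribution of $P_1-\tfrac34$ to ``exceptional small graphs'' is misleading; these terms cancel the $n=1,2$ contributions of the naive closed-form series, they are not additional graph contributions. (4)~Your reflection analysis omits the odd/even dichotomy in $n$ and the vertex-axis/edge-axis split, which give three distinct sums in the paper. The outline is salvageable once you establish the single-degree structure of $K^{(1,n)}$ explicitly, but as written the sketch would not compile into a proof.
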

  \noindent Here, $\phi$ denotes the Euler totient function. The second author first derived these formulas from Getzler's work \cite{getzler-semi-classical}.  In Section~\ref{sec:genus-0-1} we rederive them by combining our method with the work of Robinson--Whitehouse on tree spaces \cite[Theorem 3.1]{robinson-whitehouse-tree} and \cite[Theorem 1.2]{cgp-markedpoints-published}, respectively.

\medskip

\begin{remark} \label{rem:finite}
 The sum in \eqref{eq:main} is finite, because conditions \eqref{eq:di-divides}--\eqref{eq:chi-upstairs} imply bounds on $k$ and $m$ in terms of $g$, namely $k \leq g $ and $m \leq 2g + 2$.  To see this, first note that if $s = 0$, then $km = g-1$.  Assume $s > 0$.  Then $m \geq 2$ and $d_s \leq m/2$.  Combining \eqref{eq:chi-downstairs} and \eqref{eq:chi-upstairs} gives
\[
km \leq d_s \sum a_i + g - 1 \leq (k+1)m/2 + g - 1,
\]
and hence $(k-1)m \leq 2g - 2$.  This proves the claim for $k \neq 1$.   If $k = 1$, then $r \in \{0,1\}$ and $a_i, s \in \{ 1, 2 \}$.  These cases may be checked by hand.

In particular, we see that $z_g$ lies in the subring $\QQ[P_1^{\pm 1}, \dots, P_{2g+2}^{\pm 1}] \subset \widehat \Lambda$ when $g \geq 2$.
\end{remark}

\begin{remark}
Our approach to proving this formula was inspired in part by Gorsky's computation of the $S_n$-equivariant Euler characteristic of the full cohomology of $\cM_{g,n}$ \cite{gorsky-equivariant}, which builds on the work of Harer and Zagier \cite{harer-zagier-euler}.  Briefly, our graphs are analogous to the surfaces in \cite{gorsky-equivariant} and \cite{harer-zagier-euler}, using \cite{cgp-markedpoints-published} to relate graphs to top weight cohomology; in this analogy, our Section~\ref{sec:sum-over-automorphisms} corresponds roughly to the main technical result \cite[Theorem 3.3]{gorsky-equivariant} while our Sections \ref{sec:orbigraphs}, \ref{sec:static}, and \ref{sec:conclusion} correspond roughly to \cite[Theorems 4 and 5]{harer-zagier-euler}.  We now discuss more closely a comparison to Gorsky's methods; see also Remarks~\ref{rem:laurent-monomials} and~\ref{rem:connection-to-gorsky-superficial} for further discussion.

Gorsky considers the forgetful map of varieties $\cM_{g,n} \to \cM_g$, whose fiber over $C \in \mathcal{M}_g$ is $F(C,n)/\!\Aut(C)$; here $F(C,n)$ denotes the configuration space.  The $S_n$-equivariant Euler characteristic of the fiber, or more precisely its generating function, can be written as a sum of contributions for each $\tau \in \Aut(C)$.  The contribution to this sum of a pair $(C,\tau)$ is a monomial in the symmetric functions $P_i$; exactly which monomial it is turns out to depend only on the Euler characteristics of the spaces, denoted $C_j(\tau)$, parametrizing those points in $C$ whose $\tau$-orbit has size exactly $j$.  In this way, using the technique of integration against Euler characteristic (see \cite{viro-some}) for purposes of rearrangement, the generating function can be expressed as a sum of contributions from the auxiliary moduli spaces
\[\mathcal{M}_{g}(k_1,\ldots,k_r) = \{(C,\tau): \chi(C_j(\tau)) = jk_j \text{ for all }j\}.\]
The contribution from $\mathcal{M}_{g}(k_1,\ldots,k_r)$ is exactly $\chi^{\mathrm{orb}}(\mathcal{M}_{g}(k_1,\ldots,k_r)) \cdot \prod_j P_j^{k_j}.$ It is then possible to calculate these orbifold Euler characteristics directly from the orbifold Euler characteristics of spaces $\mathcal{M}_{g',n'}$ for appropriate pairs $(g',n')$.   

Note that Gorsky's argument cannot be specialized thoughtlessly to top-weight cohomology. For example, given an algebraic fiber bundle $F\to E\to B$, the Euler characteristic is multiplicative---this simple statement is one of the keys to the technique of integration against Euler characteristic---whereas the top-weight Euler characteristic is not. The argument presented in this paper restores the spirit of Gorsky's geometric arguments on the level of tropical geometry, with spaces of graphs replacing spaces of curves.  

Actually, the main arguments here are presented in a bare-bones way, in terms of finite categories: as orbisums over groupoids of graphs, calculated by pushing forward along appropriate functors.   While we did not choose to do so here, it would be possible to rephrase the arguments in this paper more geometrically, to parallel Gorsky's argument more closely.  After removing the loci of tropical curves with positive vertex weights and/or repeated markings, one obtains a forgetful map of tropical moduli spaces whose fiber over a genus $g$ graph $G$ is $F(G,n)/\!\Aut(G)$.  (This perspective was employed recently in \cite{bibby-chan-gadish-yun-homology} to obtain new computations of $\Gr^W_{2d}H^*(\mathcal{M}_{2,n};\QQ)$.)  Then arguments similar to the ones reviewed above can be used.  Significant additional complications, not present in the situation of \cite{gorsky-equivariant}, necessarily arise due to the more complicated nature of the tropical analogues of the spaces $C_j(\tau)$ mentioned above.  In brief: when $C$ is an algebraic curve, these spaces are either finite sets of points, or the complement thereof in $C$. When $C$ is a graph, more complicated loci can arise.  These complications are handled in the latter part of this paper.
\end{remark}

\begin{remark}
A closely related formula was obtained by Tsopm\'en\'e and Turchin in their work on rational homotopy groups of spaces of long links \cite[Theorem 1.3]{MR3806571}.  For $g \geq 2$ and $n \geq 1$, the genus $g$ summand of the graph complex $M(P_2^n)$ that appears in their work is quasi-isomorphic to the complex $K^{(g,n)}$ considered here. To see this, note that $M(P_2^n)$ involves graphs that may have loops and repeated markings, but not vertices of positive weight, and is thus isomorphic to the cellular chain complex of $\Delta_{g,n}$ relative to the subcomplex $\Delta_{g,n}^w$ of graphs with some vertex of positive weight.  The subcomplex $\Delta_{g,n}^w$ is contractible \cite[Theorem~1.1(1)]{cgp-markedpoints-published}, and this yields the quasi-isomorphism. (Other minor differences that do affect the Euler characteristic occur for $g \leq 1$; most importantly, $\Delta_{g,n}^w$ is empty when $g = 0$ or $(g,n) = (1,1)$.)  

It is remarkable that such closely related graph complexes appear in such distant contexts. Not only are the contexts and motivations for these two works apparently unrelated, but also the methods of proof have little in common. The work of Tsopm\'en\'e and Turchin is more algebraic. The category of functors from finite sets and surjections to $\QQ$-vector spaces plays a central role, and the arguments make significant use of the formalism of modular operads from \cite{getzler-kapranov}, as compared with the geometric approach featuring moduli spaces of graphs used here. 

Moreover, the formulas have dramatically different features. Each has advantages over the other.  The formula of Tsopm\'en\'e and Turchin combines all $g$, and expresses the corresponding $S_n$-equivariant Euler characteristics simultaneously in a single expression.  On the other hand, our approach works equally well for $n = 0$ and proves that, for $g \geq 2$, the generating function $z_g$ is a finite linear combination of Laurent monomials of degree $1 - g$ in the inhomogeneous power sum symmetric functions $P_m$ (graded such that $P_m$ has degree $m$), as conjectured by the second author over a decade ago.  The problem of computing the Euler characteristic of graph complexes for $n = 0$ was also considered earlier by Willwacher and \v{Z}ivkovi\'{c}. Their $\tilde{\chi}^{even*}_{g-1}$ agrees with the Euler characteristic of $K^{(g,0)}$.  They computed these quantities for $g \leq 31$ and remarked on the problem of finding a ``pretty" formula \cite{willwacher-zivkovic}.

We are most grateful to T.~Willwacher for bringing \cite{MR3806571} to our attention and to P.~Tsopm\'en\'e and V.~Turchin for helpful discussions regarding the relations between the two formulas and the graph complexes that give rise to them. We hope that future work will further illuminate the relations between the formulas and the reasons why these quasi-isomorphic graph complexes appear in such  distant corners of the mathematical world.
\end{remark}

\begin{remark} \label{rem:history}
The formula of Theorem \ref{thm:faber-conj} originates in a sense from
the intriguing remark ``Unfortunately, our formula for 
$\mathsf{e}^{S_n}(\overline{\cM}_{1,n})$ does not render [Poincar\'e] duality
manifest'' of Getzler \cite[p.~489]{getzler-semi-classical} and from his
use of Poincar\'e duality in the proof of Prop.~16 in
\cite{getzler-trr}. In several further computations, Poincar\'e duality
provided a nontrivial check, which led to the question of what it implies
for the cohomology of $\cM_{g,n}$. The formalism of modular operads
\cite[Thm.~8.13]{getzler-kapranov} for computing the $S_n$-equivariant
Euler characteristics of moduli spaces of stable pointed curves
from those of moduli spaces of smooth pointed curves (or vice versa)
is compatible with weights. Since the top weight Euler characteristic
of $\overline{\cM}_{g,n}$ is trivial, the top weight Euler characteristic
of $\cM_{g,n}$ can in principle be determined. A further study of the
underlying geometry by the second author led to the (computer-aided)
computation of $z_g$ for $2\leq g\leq 8$. At a conference in 2008, he shared
these results with Zagier, who then conjectured the formula of 
Theorem~\ref{thm:faber-conj}. Although the case $g=9$ could be verified
a little later,
no further progress was made since then. 
The proof of Theorem~\ref{thm:faber-conj} given
here was obtained by the first, third, and fourth authors. 

We also remark that \cite[Theorem~8.13 and Corollary~8.15]{getzler-kapranov} do in fact yield a formula for the top-weight Euler characteristic of $\cM_{g,n}$, in the sense that all of these top-weight Euler characteristics can in principle be extracted computationally from these theorems.  In this paper, we provide in Theorem~\ref{thm:faber-conj} a closed formula for each generating function $z_g$, which may be computed genus by genus, and more efficiently than if proceeding directly from \cite{getzler-kapranov}.  Another feature of Theorem~\ref{thm:faber-conj} worth mentioning, and which is not transparent from \cite{getzler-kapranov}, is the remarkable structure,  discovered in the second author's original computations, that $z_g$ is a finite linear combination of Laurent monomials of degree $1-g$ in the inhomogeneous power sum symmetric functions $P_i$, with all denominators being of the form $P_m^k$. This structure is closely parallel to the structure of the corresponding generating function for the full $S_n$-equivariant Euler characteristic of $\mathcal{M}_{g,n}$, as computed by Gorsky \cite{gorsky-equivariant}. Indeed, it reflects the geometry of the maps $M_{g,n}^{\mathrm{trop}} \to M_{g}^{\mathrm{trop}}$, just as the parallel structure of Gorsky's computations reflects the geometry of the algebraic maps $\mathcal{M}_{g,n} \to \mathcal{M}_g$. 

One may fruitfully study other graded pieces of the weight filtration on the cohomology of $\mathcal{M}_{g,n}$.  See, for instance, \cite{pw} for results on the weight 2 compactly supported cohomology of these moduli spaces.  
There are still natural formulas for the $S_n$-equivariant Euler characteristic of weight 2 compactly supported cohomology of $\mathcal{M}_{g,n}$, and these can be expressed in terms of the inhomogeneous power sums $P_i$, but the expressions are no longer finite, and no longer homogeneous.  See \cite[Theorem~1.1 and Corollary~1.2]{pw-chi}. 

\end{remark}

\noindent \textbf{Acknowledgments.} 
We are very grateful to D.~Zagier for sharing with us his conjectural
formula \eqref{eq:main} for $z_g$.  We also thank the referee for a number of insightful comments and suggestions.

MC is grateful to D.~Cartwright for helpful preliminary discussions.
MC was supported by NSF DMS-1701924, NSF CAREER DMS-1844768, the Henry Merritt Wriston Fellowship, and a Sloan Research Fellowship.  
CF is grateful to K.~Consani and E.~Getzler for helpful preliminary discussions.
CF was supported by NWO EW-613.001.651.
SG was supported by the European Research Council (ERC) under the European Union's Horizon 2020 research and innovation programme (grant agreement No 682922) and by the Danish National Research Foundation (DNRF92 and DNRF151). 
SP was supported by NSF grants DMS-2001502 and DMS-2053261 and a Simons Fellowship.  He thanks UC Berkeley and MSRI for their hospitality and ideal working conditions.


\section{Preliminaries}

All of the representations, symmetric functions, and graph complexes that we consider are with rational coefficients.  We begin by recalling a few basic facts that will be used throughout.  

\subsection{Equivariant Euler characteristics}

For any partition $\lambda \vdash n$, let $V_\lambda$ denote the corresponding irreducible representation of $S_n$, and $s_\lambda$ the corresponding Schur function, which is an element of the ring of symmetric functions.  Then $\coprod_{n\ge 0} \{s_\lambda \colon \lambda\vdash n\}$ is a vector space basis for $\Lambda$, where $$\Lambda = \lim_{\longleftarrow} \QQ[x_1,\ldots,x_n]^{S_n}$$ is the ring of symmetric functions (the inverse limit is calculated in the category of graded rings).  Another natural basis for the ring of symmetric functions is given by the power sum symmetric functions: for each $i>0$, let $p_i = \sum_{j>0} x^i_j$; and for any $n\ge 0$ and partition $\lambda = (\lambda_1\ge \cdots \ge \lambda_r) \vdash n$, let $p_\lambda = p_{\lambda_1}\cdots p_{\lambda_r}$.
Then $\coprod_{n\ge 0} \{p_\lambda \colon \lambda\vdash n\}$ is also a basis for $\Lambda$.

It will also be convenient to define the inhomogeneous power sum symmetric functions as
$$P_i = 1+p_i, \qquad P_\lambda = P_{\lambda_1}\!\cdots P_{\lambda_r} \text{ for } \lambda = (\lambda_1\ge \cdots\ge \lambda_r) \text{ a partition.}$$

Let $V = \{V^n\}_{n\ge 0}$ be a sequence of graded virtual $S_n$-representations, where $V^n$ has graded pieces $V^n = \bigoplus_{i\ge 0} V^n_i$, and $$V^n_i = \bigoplus_{\lambda\,\vdash\,n} c^n_{i,\lambda} V_\lambda\qquad\text{for}\quad c^n_{i,\lambda} \in \ZZ_{\geq 0}.$$  
\defnow{The Frobenius characteristic of $V$ is the symmetric function
$$z_V = \sum_{i,n\ge 0} (-1)^i c^n_{i,\lambda} s_\lambda \quad\in \widehat \Lambda = \varprojlim_n \QQ\llbracket x_1, \dots, x_n \rrbracket^{S_n}.
$$
}
A useful expression for $z_V$ in terms of power sum symmetric functions is as follows. Given $\sigma \in S_n$ with cycle type $a_1\ge a_2\ge \cdots \ge a_\ell$, let 
\begin{equation} \label{eq:psi-notation}
\psi(\sigma) = p_{a_1}\cdots p_{a_\ell}.
\end{equation} 
Then it is well known that

$$z_V = \sum_{i,n\ge 0} \sum_{\sigma \in S_n} \frac{(-1)^i}{n!}  \chi_{V^n_i}(\sigma) \psi(\sigma),$$
where $\chi_{V^n_i}(\sigma)$ denotes the character of $V^n_i$ on the conjugacy class $\sigma$ of $S_n$.

\medskip

\subsection{Graphs and graph complexes}\label{subsec:Graphs and graph complexes}

To us a \emph{graph} $G$ is given by two finite sets $V(G)$ and $H(G)$, an involution $s:H(G) \to H(G)$ without fixed points, and a map $r: H(G) \to V(G)$.  An \emph{isomorphism} $\tau: G \to G'$ is given by bijective set maps $\tau_V: V(G) \to V(G')$ and $\tau_H: H(G) \to H(G')$ such that $\tau_H \circ s = s' \circ \tau_H$ and $\tau_V \circ r = r' \circ \tau_H$.  In \cite{cgp-markedpoints-published} we also considered some non-invertible morphisms between graphs, but in this paper we shall only need the isomorphisms.  The elements of $V(G)$ are called vertices, the elements of $H(G)$ are called half-edges, and we let $E(G) = H(G)/(x \sim s(x))$ whose elements we call edges.
The \emph{geometric realization} of $G$ is the topological space $|G| = (V(G) \amalg ([-1,1] \times H(G)) / \!\sim$, where the equivalence relation is generated by $(t,x) \sim (-t,s(x))$ and $r(x) \sim (1,x)$ for $x \in H(G)$ and $t \in [-1,1]$.  See op.\ cit.\ for more details and motivation.

An $n$-marked graph is a pair $(G,m)$ where $G$ is a graph and $m$ is a function $\{1, \dots, n\} \to V(G)$ which we call the \emph{marking}.  It is \emph{stable} if $|r^{-1}(v) \amalg m^{-1}(v)| \geq 3$ for all $v \in V(G)$, and \emph{connected and genus $g$} if $|G|$ is connected and has Euler characteristic $1-g$.

\defnow{
  Let $\J_{g,n}^\circ$ be the groupoid whose objects are stable, connected, genus $g$, $n$-marked graphs $(G,m)$ as above, satisfying additionally that $m: \{1, \dots, n\} \to V(G)$ is injective.

  Isomorphisms $\tau: (G,m) \to (G',m')$ in $\J_{g,n}^\circ$ are graph isomorphisms $\tau: G \to G'$ satisfying $\tau_V \circ m = m'$.
}

This groupoid is essentially small, and we shall tacitly replace it
with an equivalent small subcategory.  In the category $\J_{g,n}$
defined in \cite[Section 2.2]{cgp-markedpoints-published} we did not require $m$ to be injective, and we also included the data of a \emph{weighting} $w: V(G) \to \NN$.  We may identify $\J_{g,n}^\circ$ with a subcategory of $\J_{g,n}$ by setting $w = 0$.  We also set the notation $\J_g^\circ = \J_{g,0}^\circ.$

\defnow{
  For a graph $G$ and an automorphism $\tau: G \to G$, we write $\sgn(\tau_E) \in \ZZ^\times$ for the sign of the induced permutation $\tau_E$ on $E(G)$.

  A graph $G$ has \emph{alternating automorphisms} if $\sgn (\tau_E) = 1$ for all automorphisms $\tau$.
}

Finally, let us recall the chain complex $\Kgn $ associated to the category $\J_{g,n}^\circ$.  It has generators $[G,m,\omega]$ where $(G,m) \in \J_{g,n}^\circ$ is an object and $\omega$ is a total order on $E(G)$, of homological degree the cardinality of $E(G)$.  These generators are subject to the relation that $[G,m,\omega] = \pm [G',m',\omega']$ if there exists an isomorphism $\tau: (G,m) \to (G',m')$ in $\J_{g,n}^\circ$; the sign ``$\pm$'' is the sign of the permutation determined by the bijection $\tau_E: E(G) \to E(G')$ and the total orders $\omega$ and $\omega'$.  In particular we obtain a basis of $\Kgn $ by choosing one object $(G,m)$ in each isomorphism class with alternating automorphisms, and for each such choosing a total order of $E(G)$.  The boundary homomorphism  on $\Kgn$ is given by signed sum of one-edge contractions and has degree $-1$. Its precise definition, which we do not need here, is in \cite[\S5.2]{cgp-markedpoints-published}.

The following proposition is proved in \cite[Proposition 5.3]{cgp-markedpoints-published}.  The proof given there provides an explicit isomorphism.  Briefly, we identified $\Delta_{g,n}$, the moduli space of tropical curves of volume 1, with the dual complex of the boundary divisor in the stable curves compactification $\overline{\cM}_{g,n}$.  Then we proved that the cellular chain complex on $\Delta_{g,n}$, with degrees shifted by one, is quasi-isomorphic to $\Kgn$.

\propnow{\label{prop:kgn}
  For $g > 0$ and $n \geq 0$ with $2g-2+n > 0 $ and $(g,n) \neq (1,1)$, with the boundary homomorphism $\partial: \Kgn  \to \Kgn $ defined in \cite{cgp-markedpoints-published}, there is an $S_n$-equivariant isomorphism
  \begin{equation*}
    H_*(\Kgn ,\partial) \cong \Gr_{2d}^W H^{2d-*}(\cM_{g,n};\QQ).
  \end{equation*}
}

\cornow{
 The $S_n$-equivariant top weight Euler characteristic of $\cM_{g,n}$ is given by
  \begin{equation*}
    z_g = \sum_{i,n\ge0} \sum_{\sigma \in S_n} \frac{(-1)^i}{n!} \chi_{\Kgn_i}(\sigma) \psi(\sigma)
  \end{equation*}
}
\begin{proof}
  The Frobenius characteristic of the homology of $\Kgn$ is equal to the Frobenius characteristic on the chain level, as for any Euler characteristic.
\end{proof}

\begin{remark}
Note that $z_g$ is also the generating function for the $S_n$-equivariant weight zero compactly supported Euler characteristics of $\cM_{g,n}$, since Frobenius characteristics are invariant under duality.
\end{remark}

\noindent For later use we define a larger groupoid, whose morphisms are allowed to permute the markings.
\defnow{
  Let $\J_{g,n}^\circ/S_n$ be the groupoid with the same objects as $\J_{g,n}^\circ$, but whose isomorphisms $\tau: (G,m) \to (G',m')$ are the isomorphisms of graphs $\tau: G \to G'$ such that $\tau_V \circ m = m' \circ \sigma$ for some permutation $\sigma \in S_n$.  
  We shall write $\tau_{\vert m} \in S_n$ for the (uniquely determined) permutation satisfying $\tau_V \circ m = m \circ \tau_{\vert m}$.
}
\noindent The reader may recognize $\J_{g,n}^\circ/S_n$ as an instance of the Grothendieck construction, applied to the functor $S_n \to \mathsf{Cat}$ sending the unique object of $S_n$ to $\J_{g,n}^\circ.$ 

\subsection{Orbi-counting and orbi-summation}
\label{sec:orbi-counting-adding}

Let us make some elementary observations about counting objects with automorphisms.  We shall use these as organizing principles for later arguments in this paper.  By a finite groupoid we mean a groupoid that is equivalent to one with a finite number of objects and morphisms. 

Let $\pi_0(\mathcal{G})$ denote the set of isomorphism classes in a finite groupoid $\mathcal{G}$, let $V$ be a rational vector space, and let $f: \pi_0(\mathcal{G}) \to V$ be a function.  We denote the orbisum of $f$ by
\begin{equation*}
  \int_\mathcal{G} f = \int_{x \in \mathcal{G}} f(x) := \sum_{[x] \in \pi_0(\mathcal{G})} \frac{f(x)}{|\Aut(x)|} \in V.
\end{equation*}
In particular,  the rational number $\int_{\mathcal{G}} 1$ is the \emph{groupoid cardinality} of $\mathcal{G}$. 

\begin{remark} \label{rem:inf}
 Suppose $\pi_0(\mathcal{G})$ is infinite.  If $V$ is complete with respect to a filtration $V \supset F^0V \supset F^{1}V \supset \dots$ and $\pi_0(\mathcal{G}) \setminus f^{-1}(F^i V)$ is finite for all $i$, then the definition of $\int_{\mathcal{G}} f$ still makes sense, and the lemmas and discussion below extend essentially verbatim.  We will use orbisummation in this level of generality as needed, without further mention.
\end{remark}

If $F: \mathcal{G} \to \mathcal{H}$ is a functor between finite groupoids and $f: \pi_0(\mathcal{G}) \to V$ is a function, we define the push-forward $(F_* f): \pi_0(\mathcal{H}) \to V$ by the formula
\begin{equation}\label{eq:count-over-comma}
  (F_*f)([h]) = \int_{(F\downarrow h)} f,
\end{equation}
where the subscript denotes the ``comma category'' $(F \downarrow h)$. Recall that the objects of $(F\downarrow h)$ are pairs $(g,\phi)$ with $g$ an object of $\mathcal{G}$ and $\phi: F(g) \to h$ a morphism in $\mathcal{H}$, and morphisms $(g,\phi) \to (g',\phi')$ are morphisms $j\colon g \to g'$ in $\mathcal{G}$ such that  $\phi' \circ F(j) = \phi$. Regard $f$ as a function on $\pi_0(F\downarrow h)$ by composing with the natural map $\pi_0(F\downarrow h) \to \pi_0(\mathcal{G})$.  

\begin{lemma}
The push-forward $F_*f$ is equivalently characterized by
\begin{equation}\label{eq:more-explicitly}
(F_*f)([h]) = |\!\Aut_{\mathcal{H}}(h)| \sum \frac{f(g_i)}{|\!\Aut_\mathcal{G}(g_i)|}, 
\end{equation}
where the sum is over objects $g_i \in \mathcal{G}$, one in each isomorphism class of $\mathcal{G}$ with $F(g_i) \in [h] \in \pi_0(\mathcal{H})$.  
\end{lemma}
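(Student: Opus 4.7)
The plan is to unwind the definition $\int_{(F\downarrow h)} f = \sum_{[x] \in \pi_0(F\downarrow h)} f(x)/|\Aut(x)|$ by computing the isomorphism classes and automorphism groups in the comma category $(F \downarrow h)$ explicitly. Since $\mathcal{H}$ is a groupoid, every $\phi: F(g)\to h$ in $(F\downarrow h)$ is an isomorphism, and similarly every morphism $j: (g,\phi)\to(g',\phi')$ in $(F\downarrow h)$ is an isomorphism (its inverse in $\mathcal{G}$ provides an inverse in $(F\downarrow h)$). Thus $(F\downarrow h)$ is itself a groupoid, and the orbisum over it is well-defined.

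First I would identify $\pi_0(F\downarrow h)$. Projection $(g,\phi) \mapsto g$ sends isomorphism classes of $(F\downarrow h)$ to the subset of $\pi_0(\mathcal{G})$ consisting of classes $[g]$ with $F(g) \in [h]$. Choose representatives $g_i$, one in each such class. For a fixed $g_i$, the set of morphisms $\phi: F(g_i)\to h$ is a torsor under $\Aut_\mathcal{H}(h)$, hence has cardinality $|\Aut_\mathcal{H}(h)|$. Two objects $(g_i,\phi)$ and $(g_i,\phi')$ are isomorphic in $(F\downarrow h)$ iff there exists $j \in \Aut_\mathcal{G}(g_i)$ with $\phi' \circ F(j) = \phi$, i.e.\ iff $\phi$ and $\phi'$ lie in the same orbit of the right action of $\Aut_\mathcal{G}(g_i)$ on $\Hom_\mathcal{H}(F(g_i),h)$ by $\phi \cdot j = \phi \circ F(j)$.

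Next I would compute automorphisms. An automorphism of $(g_i,\phi)$ is $j \in \Aut_\mathcal{G}(g_i)$ with $\phi \circ F(j) = \phi$; since $\phi$ is invertible, this says $F(j) = \mathrm{id}_{F(g_i)}$. Hence $\Aut_{(F\downarrow h)}(g_i,\phi) = K_i$, where $K_i := \ker\!\bigl(F\colon \Aut_\mathcal{G}(g_i) \to \Aut_\mathcal{H}(F(g_i))\bigr)$. In particular, $K_i$ is the stabilizer of every $\phi$ under the action above, so all orbits have the same size $|\Aut_\mathcal{G}(g_i)|/|K_i|$, and the number of orbits is
\[
N_i = \frac{|\Aut_\mathcal{H}(h)| \cdot |K_i|}{|\Aut_\mathcal{G}(g_i)|}.
\]

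Finally I would assemble the orbisum. Each of the $N_i$ isomorphism classes in $(F\downarrow h)$ lying over $[g_i]$ contributes $f(g_i)/|K_i|$ to $\int_{(F\downarrow h)} f$, so
\[
(F_*f)([h]) \;=\; \sum_i N_i \cdot \frac{f(g_i)}{|K_i|} \;=\; |\Aut_\mathcal{H}(h)| \sum_i \frac{f(g_i)}{|\Aut_\mathcal{G}(g_i)|},
\]
which is \eqref{eq:more-explicitly}. The only subtlety is to verify that the stabilizer of $\phi$ is independent of $\phi$ (so that orbits have equal size) and to be careful with conventions on the direction of the action; both fall out of the fact that $\mathcal{H}$ is a groupoid and $\phi$ is invertible. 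No substantial obstacle arises.
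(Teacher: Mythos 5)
Your proof is correct and uses essentially the same approach as the paper: identify the comma category's isomorphism classes and automorphism groups via the action of $\Aut_\mathcal{G}(g)$ on $\on{Hom}_\mathcal{H}(F(g),h)$ by precomposition with $F(j)$, then apply orbit-stabilizer. You spell things out slightly more explicitly (naming the kernel $K_i$ and counting $N_i$ orbits) whereas the paper folds these together in one line, but the underlying argument is the same.
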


\begin{proof}
Reduce to the case that $\mathcal{H}$ has one object $h$ and $\mathcal{G}$ is skeletal, and let $g\in \mathcal{G}$.  Under the $\Aut_\mathcal{G}(g)$-action on $\Aut_{\mathcal{H}}(h)$ given by $\phi \cdot \psi = \phi\circ F(\psi)$, the stabilizer of $\phi\in\Aut_{\mathcal{H}}(h)$ is isomorphic to $\Aut_{(F\downarrow h)}(g,\phi)$, and the orbit of $\phi$ bijects with the isomorphism class $[(g,\phi)]$.  So
$$F_*(f)(h) = \sum_{g\in\mathcal{G}}\sum_{\phi\in\Aut_\mathcal{H} (h)} \frac{f(g)}{|[(g,\phi)]||\!\Aut_{(F\downarrow h)}(g,\phi)|}=|\!\Aut_\mathcal{H}(h)|\sum_{g\in\mathcal{G}} \frac{f(g)}{|\!\Aut_{\mathcal{G}}(g)|},$$
as required.
\end{proof}

The following lemma is a straightforward consequence of~\eqref{eq:more-explicitly}.
\lemnow{\label{lem:orbi-counting}
  Let $F: \mathcal{G} \to \mathcal{G}'$ and $F': \mathcal{G}' \to \mathcal{G}''$ be functors between finite groupoids and let $f: \pi_0(\mathcal{G}) \to V$ be a function to a rational vector space.  Then
  \begin{equation*}
    F'_* (F_* f) = (F' \circ F)_* f.
  \end{equation*}
  In particular, letting $\mathcal{G}''$ be trivial, $$\int_{\mathcal{G}} f = \int_{\mathcal{G}'} (F_* f).$$ 
  }

If $F: \mathcal{G} \to \mathcal{H}$ is a functor between finite groupoids and $\alpha: \pi_0(\mathcal{H}) \to \QQ$ is a function, we similarly define $F^*\alpha = \alpha \circ (\pi_0(F))\colon \pi_0(\mathcal{G})\to\QQ$.  The sets of functions $\pi_0(\mathcal{H}) \to \QQ$ and $\pi_0(\mathcal{G}) \to \QQ$ are rings under pointwise product, and the sets of functions $\pi_0(\mathcal{H}) \to V$ and $\pi_0(\mathcal{G}) \to V$ are modules over those two rings, respectively.  Then $F^*$ is a ring homomorphism, and it is easily verified that $F_*$ is a homomorphism of modules over $\mathrm{Map}(\pi_0(\mathcal{H}),\QQ)$, i.e., that the equation
\begin{equation}\label{eq:13}
  F_*((F^*\alpha) \cdot f)= \alpha \cdot(F_*f) 
\end{equation}
holds for each $f: \pi_0(\mathcal{G}) \to V$.  In particular, we have
\begin{equation}\label{eq:4}
  F_*F^* \alpha = \alpha \cdot (F_*F^* 1).
\end{equation}
The function $F_*F^* 1_H = F_* 1_G: \pi_0(\mathcal{H}) \to \QQ$ measures the groupoid cardinalities of the comma categories $(F\downarrow h)$. 
\exnow{\label{example:free-loops} For a finite groupoid $\mathcal{G}$, let
  $\mathrm{Fun}(\ZZ,\mathcal{G})$ be the groupoid whose objects are functors from the group $\ZZ$,
  regarded as a one-object groupoid, to $\mathcal{G}$ and whose morphisms are the natural isomorphisms.
  Objects shall be written $(x,\tau)$, where $x$ is the image of the one object, and  $\tau \in \Aut_\mathcal{G}(x)$ is the image of the generating morphism $1 \in \ZZ$.
  There is a forgetful functor
  \begin{equation*}
    U: \mathrm{Fun}(\ZZ,\mathcal{G}) \to \mathcal{G}
  \end{equation*}
  given on objects by $(x,\tau) \mapsto x$.  A function $f:\pi_0(\mathrm{Fun}(\ZZ,\mathcal{G})) \to V$ associates to each $(x,\tau)$ an element $f(x,\tau) \in V$, which depends only on the conjugacy class of $\tau \in \Aut(x)$ and the isomorphism class of $x$.  Then from~\eqref{eq:count-over-comma}, we see that $U_*f: \pi_0(\mathcal{G}) \to V$ is given by
  \begin{equation*}
    (U_* f)([x]) = \sum_{\tau \in \Aut_\mathcal{G}(x)} f(x,\tau),
  \end{equation*}
  because the comma category $(U \downarrow x)$ is equivalent to the set $\Aut_\mathcal{G}(x)$, regarded as a discrete category, i.e., a category in which all morphisms are identities.  The orbisum of $f$ then becomes
  \begin{equation*}
    \int_{\mathrm{Fun}(\ZZ,\mathcal{G})} f = \int_{\mathcal{G}} U_*f = \sum_{[x] \in \pi_0(\mathcal{G})} \frac1{|\Aut_\mathcal{G}(x)|} \sum_{\tau} f(x,\tau),
  \end{equation*}
  where the first sum ranges over objects $x \in \mathcal{G}$, one in each isomorphism class, and the second over all automorphisms of $x$.
}


\exnow{\label{ex:frob-char-as-orbisum}
  Let $V$ be a sequence of graded $S_n$ representations, regarded as functor from (a skeletal subcategory of) finite sets and bijections to finite-dimensional graded rational vector spaces, and let
  \begin{equation*}
    f_V: \pi_0(\mathrm{Fun}(\ZZ,\mathrm{FinSet})) \to \QQ
  \end{equation*}
  be the function that sends a finite set $S$ and a bijection $\sigma: S \to S$ to the supertrace of $\sigma$ acting on $V$ (i.e., alternating sum of the traces in each degree).  Then the Frobenius characteristic is
  \begin{equation*}
    z_V = \int_{\mathrm{Fun}(\ZZ,\mathrm{FinSet})} \psi \cdot f_V,
  \end{equation*}
  where $\psi: \pi_0(\mathrm{Fun}(\ZZ,\mathrm{FinSet})) \to \widehat\Lambda$ is the function given by~(\ref{eq:psi-notation}).
}

For later use we record the following observation, whose proof we leave as an exercise.  

\lemnow{\label{lem:fiber-free-loop-space}
  Let $F: \mathcal{G} \to \mathcal{H}$ be a functor between finite groupoids, and assume that $(F\downarrow h)$ is equivalent to a discrete category for all objects $h \in \mathcal{H}$.  Then the induced functor
  \begin{equation*}
    LF: \mathrm{Fun}(\ZZ,\mathcal{G}) \to \mathrm{Fun}(\ZZ,\mathcal{H}),
  \end{equation*}
  given by composing with $F$, has the same property: the comma category $(LF\downarrow (h,\tau))$ over any object $(h,\tau) \in \mathrm{Fun}(\ZZ,\mathcal{H})$ is equivalent to a discrete category.  Moreover, the set $\pi_0(LF\downarrow (h,\tau))$ may be identified with the $\tau$-fixed elements of the set $\pi_0 (F\downarrow h)$.
  }

\noindent We will apply this lemma in the proof of Proposition~\ref{prop:start}.

\section{The contribution of a graph as a sum over automorphisms}  \label{sec:sum-over-automorphisms}

Recall that $\Kgn _i$ has a basis with one generator for each isomorphism class of stable, connected, genus $g$, $n$-marked graphs with $i$ edges, and alternating automorphisms.  This basis is not quite canonical; the sign of each element depends on a choice of ordering of the edges, up to alternating permutation, but these signs will not affect our calculations.

\defnow{
  For a bijection $\tau: S \to S$ of a finite set $S$, write
  \begin{equation*}
    P(\tau) = P_{\lambda_1} \dots P_{\lambda_s} \in \widehat\Lambda,
  \end{equation*}
  where $\lambda_1 \geq \dots \geq \lambda_s$ is the cycle type of $\tau$ and $P_a = 1 + p_a \in \widehat\Lambda$ for $a \in \ZZ_{>0}$.
}

\begin{prop}  \label{prop:start}
  The element $z_g \in \widehat\Lambda$ may be expressed as
  \begin{equation*}
    z_g = \sum_{G \in \pi_0(\J_g^\circ)} \frac{z_G}{|\Aut(G)|},
  \end{equation*}
  where the sum is over one $G$ in each isomorphism class, and
  \[
    z_G =  (-1)^{|E(G)|} \sum_{\tau \in \Aut(G)} {\rm sgn}(\tau_E) \frac{P(\tau_V) P(\tau_E)}{P(\tau_H)},
  \]
where $\tau_V$, $\tau_E$, and $\tau_H$ denote the permutations $\tau$ induces on the finite sets $V(G)$, $E(G)$, and $H(G)$, respectively.
\end{prop}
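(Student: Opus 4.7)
The plan is to rewrite $z_g$ as an orbisum over the free-loop groupoid of marked stable graphs, push this orbisum forward along the ``forget markings and stabilize'' functor $F : \coprod_n \J_{g,n}^\circ/S_n \to \J_g^\circ$, and compute the resulting fiber contributions orbit-by-orbit.

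\textbf{Step 1 (orbisum reformulation).} Unpacking the preceding corollary, a basis element $e_{[G,m]}$ of $\Kgn_i$ is fixed up to sign by $\sigma \in S_n$ precisely when some $\tau \in K_m := \Aut_{\J_{g,n}^\circ/S_n}(G,m)$ satisfies $\tau_{|m} = \sigma$, with sign $\sgn(\tau_E)$. Non-alternating classes cancel via $\sum_{\tau \in H_m}\sgn(\tau_E) = 0$ (where $H_m := \Aut_{\J_{g,n}^\circ}(G,m)$), and reindexing $\pi_0(\J_{g,n}^\circ)$ by $\pi_0(\J_{g,n}^\circ/S_n)$ using the orbit-stabilizer relation $n!\cdot|H_m|/|K_m|$ for the $S_n$-action gives
$$z_g = \int_{\coprod_n \mathrm{Fun}(\ZZ, \J_{g,n}^\circ/S_n)} (-1)^{|E(G)|}\sgn(\tau_E)\psi(\tau_{|m}).$$

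\textbf{Step 2 (push-forward to $\J_g^\circ$).} The key observation is that $\ker F$ is trivial: any $j \in K_m$ with $F(j) = \mathrm{id}_{\tilde G}$ fixes every vertex of $\tilde G \subseteq V(G)$ and every outer half-edge of each subdivided edge, hence by propagation along each chain of sub-edges also fixes every subdivision point and sub-edge, and injectivity of $m$ then forces the marking permutation to be trivial. Consequently each comma category $(F \downarrow \tilde G)$ is equivalent to a discrete category whose isomorphism classes are the \emph{configurations} $c = (S, \vec k)$ on $\tilde G$, where $S \subseteq V(\tilde G)$ records marked vertices and $\vec k : E(\tilde G) \to \ZZ_{\geq 0}$ records subdivision counts. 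By Lemma~\ref{lem:fiber-free-loop-space}, $\pi_0(LF \downarrow (\tilde G, \tilde\tau))$ is identified with the $\tilde\tau$-invariant configurations, each carrying a unique automorphism lift $\tau_c$. Invoking Lemma~\ref{lem:orbi-counting} and Example~\ref{example:free-loops} reduces the proposition to proving, for every $(\tilde G, \tilde\tau)$, the identity
$$\sum_{c\,:\,\tilde\tau c = c} (-1)^{|E(G_c)|}\sgn((\tau_c)_E)\psi((\tau_c)_{|m_c}) = (-1)^{|E(\tilde G)|}\sgn(\tilde\tau_E)\frac{P(\tilde\tau_V)P(\tilde\tau_E)}{P(\tilde\tau_H)}.$$

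\textbf{Step 3 (orbit-wise evaluation).} Both sides factor as products over $\tilde\tau$-orbits on $V(\tilde G)$ and $E(\tilde G)$. A $\tilde\tau_V$-orbit of size $\ell$ contributes $1 + p_\ell = P_\ell$ (all vertices marked or none), producing $P(\tilde\tau_V)$. For an edge orbit $\mathcal{O}$ of size $\ell$ I would distinguish \emph{Type~I}, where the stabilizer of a representative edge in $\langle\tilde\tau\rangle$ acts trivially on its half-edges (so the half-edges along $\mathcal{O}$ form two $\tilde\tau_H$-orbits of size $\ell$), and \emph{Type~II}, where this stabilizer swaps the half-edges (one $\tilde\tau_H$-orbit of size $2\ell$). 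In Type~I, $\tau^\ell$ acts as identity on each chain, so $k$ subdivision points per edge produce $k+1$ sub-edge orbits and $k$ subdivision-point orbits, all of size $\ell$; combining $(-1)^{\ell(k+1)}$ and $(-1)^{(\ell-1)(k+1)}$ yields $(-1)^{k+1}p_\ell^k$, which sums geometrically to $-1/P_\ell = -P(\tilde\tau_E)|_\mathcal{O}/P(\tilde\tau_H)|_\mathcal{O}$. In Type~II, $\tau^\ell$ acts as a reversal of each chain; splitting by parity of $k$ (even $k = 2t$ contributes $(-1)^{t-1}p_{2\ell}^t$, odd $k = 2t+1$ contributes $(-1)^{t+1}p_\ell p_{2\ell}^t$) and summing two geometric series yields $-P_\ell/P_{2\ell} = -P(\tilde\tau_E)|_\mathcal{O}/P(\tilde\tau_H)|_\mathcal{O}$. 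Multiplying over all orbits accumulates a sign $(-1)^{\#\text{edge orbits}}$, which equals $(-1)^{|E(\tilde G)|}\sgn(\tilde\tau_E)$ via $\sgn(\tilde\tau_E) = (-1)^{|E(\tilde G)|-\#\text{edge orbits}}$, completing the identification.

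\textbf{Main obstacle.} The most error-prone step is the Type~II sign bookkeeping in Step~3, where odd and even $k$ behave differently and one must carefully track the fate of the ``middle'' subdivision point or sub-edge fixed by $\tau^\ell$; a secondary subtlety is verifying the triviality of $\ker F$ in Step~2 by the chain-propagation argument, since this is what makes the reduction to a clean ``sum over configurations with unique lifts'' possible.
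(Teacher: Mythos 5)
Your proposal is correct and follows essentially the same route as the paper's proof: the same reduction of $z_g$ to an orbisum over $\coprod_n \mathrm{Fun}(\ZZ, \J_{g,n}^\circ/S_n)$, the same identification of the comma category for the forget-markings functor with a discrete groupoid of ``configurations'' (what the paper calls $N(G)^{\langle\tau\rangle}$), and the same push-forward computation. The one organizational difference is in Step 3: the paper uses the global bijection $h \mapsto (a_h, b_h)$ on $(V \amalg E_-)/\tau$ and $(E_+ \amalg E_-)/\tau$ and collects the resulting geometric series all at once, whereas you carry out the geometric-series summation edge-orbit by edge-orbit with a Type~I/Type~II case split (your Type~I and Type~II are exactly the paper's $E_+$ and $E_-$). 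Your per-orbit sign bookkeeping — $(-1)^{k+1}p_\ell^k \rightsquigarrow -1/P_\ell$ for Type~I, and the even/odd-$k$ split giving $-P_\ell/P_{2\ell}$ for Type~II — checks out and reproduces the paper's final expression after the observation $(-1)^{|E|}\sgn(\tau_E) = (-1)^{|E/\tau|}$. You also make explicit the verification that automorphisms in the comma fiber are trivial (via chain propagation and injectivity of $m$), which the paper states without elaboration; this is a worthwhile addition.
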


Note that, by Example~\ref{example:free-loops}, the two sums $\sum_{G} \frac1{|\Aut(G)|} \sum_{\tau \in \Aut(G)}$ may be combined into one orbisum over the groupoid $\mathrm{Fun}(\ZZ,\J_g)$.  We shall prove the proposition by finding a formula for $z_g$ as an orbisum over a different groupoid, and then applying Lemma~\ref{lem:orbi-counting}.  
We shall employ this language of orbisums over groupoids here as a warmup for later arguments.

\begin{remark}\label{rem:laurent-monomials}
Before we proceed to the proof of the proposition, let us comment on the relationship with Gorsky's computations in \cite{gorsky-equivariant}. Gorsky likewise shows that, for fixed $g \geq 2$, the generating function for the $S_n$-equivariant Euler characteristic of $\mathcal{M}_{g,n}$ is a finite linear combination of Laurent monomials in the power sum symmetric functions $P_i$. Moreover, setting the degree of $P_i$  to be $i$, each of these monomials has the following properties:
\begin{enumerate}
\item the denominator is a pure power $P_m^k$,
\item each $P_\ell$ appearing in the numerator has $\ell | m$,
\item the total degree is $2-2g$.
\end{enumerate}
Indeed, in Gorsky's computations, the space of pairs $(C,\tau)$, where $C$ is an algebraic curve of genus $g$ and $\tau \in \Aut(C)$,
contributes to the Laurent monomial
\[\prod_j P_j^{\chi_c (C_j(\tau))/j},\]
where $C_j(\tau)$ denotes the locus of points in $C$ of $\tau$-orbit size exactly $j$.   The contribution to the coefficient of a given monomial is then the orbifold Euler characteristic of the moduli space of such pairs $(C,\tau)$. 


Similarly, we show that each 
pair  $(G, \tau)$, where $G$ is the dual graph of a stable curve of genus $g$ and $\tau \in \Aut(G)$, contributes to exactly one monomial, namely
$\prod_j P_j^{k_j}$ where $k_j$ is the compactly supported Euler characteristic of the image in $G/\tau$ of the locus of points in $G$ whose $\tau$-orbit has size  exactly $j$.  The contribution to the corresponding coefficient is, up to sign, $\frac{1}{|\Aut(G)|}$, and summing over all pairs $(G,\tau)$ with the same parameters $k_j$, the total contribution is an orbifold Euler characteristic of the moduli space of such pairs.  In general, controlling which Laurent monomials appear can be difficult, because, unlike for curves, the locus of points in a graph $G$ whose $\tau$-orbit has size some fixed $j < \ord(\tau)$ can be more complicated than a finite set.  However, we shall see that all contributions from pairs $(G,\tau)$ cancel except for those for which $G/\tau$ is a {\em reduced, static orbigraph}; see Section~\ref{sec:orbigraphs} and thereafter. For these,  the monomials appearing in $z_g$ do satisfy properties $(1)$ and $(2)$, and an Euler characteristic computation shows that each is homogeneous of degree $1-g$.
\end{remark}

\begin{proof}[Proof of Proposition~\ref{prop:start}]
  In this proof, notation like $\sum_{x \in \pi_0(\mathcal{G})} f(x)$ shall always mean the sum over one object $x \in \mathcal{G}$ in each isomorphism class in the groupoid $\mathcal{G}$.  

  Let $\J_{g,n}^+ \subset \J_{g,n}^\circ$ be the full sub-groupoid whose objects are those with alternating automorphisms.  A permutation $\sigma \in S_n$ defines a functor $\J_{g,n}^+ \to \J_{g,n}^+$ which we shall denote $G \mapsto G.\sigma$, and this functor in turn permutes the set $\pi_0(\J_{g,n}^+)$ of isomorphism classes.  The chain complex $\Kgn $ has one basis element for each isomorphism class in $\J_{g,n}^+$, canonically determined up to a sign, and with respect to this basis the action of $\sigma$ is a signed permutation.  Non-zero diagonal entries appear for $G \in \J_{g,n}^+$ when there exists an isomorphism $\phi: G \cong G.\sigma$, and in this case the entry is $\sgn(\phi_E) \in \ZZ^\times$.  The condition that $G$ has alternating automorphisms ensures that this sign is well defined, independent of the choice of $\phi$.  Then the supertrace of $\sigma: \Kgn \to \Kgn$ is
  \begin{equation*}
    \sum_{\substack{G \in \pi_0(\J^{+}_{g,n}) \\ \exists \phi\,\colon \!G\xrightarrow{\cong}G. \sigma}}(-1)^{|E(G)|} \sgn(\phi_E).
  \end{equation*}
  so we get
  \begin{equation*}
    z_g = \sum_n \sum_{\sigma \in S_n} \sum_{\substack{G \in \pi_0(\J^{+}_{g,n}) \\ \exists \phi\,\colon \!G\xrightarrow{\cong}G. \sigma}} \frac1{n!} (-1)^{|E(G)|} \sgn(\phi_E) \psi(\sigma) \in \widehat\Lambda.
  \end{equation*}
  When an isomorphism $\phi: G \cong G.\sigma$ exists, then there exist precisely $|\Aut_{\J_{g,n}^\circ}(G)|$ many such isomorphisms each with the same value of $\sgn(\phi_E)$, and if $G$ has non-alternating automorphisms, then $\sum_{\phi \in \on{Iso}(G,G.\sigma)} \frac{\sgn(\phi_E)}{|\Aut G|}$ vanishes because $\sgn(\phi_E)$ is negative for exactly half of the isomorphisms from $G$ to $G. \sigma$.  Hence the formula for $z_g$ may be rewritten as
  \begin{equation*}
    z_g = \sum_n \sum_{G \in \pi_0(\J_{g,n}^\circ)} \sum_{\sigma \in S_n} \sum_{\phi \in \on{Iso}(G,G.\sigma)} \frac{(-1)^{|E(G)|}}{n!|\Aut_{\J_{g,n}^\circ}(G)|} \sgn(\phi_E)\psi(\sigma).
  \end{equation*}
  The set of pairs $(\sigma,\phi)$ with $\sigma \in S_n$ and $\phi \in \on{Iso}(G,G.\sigma)$ is in bijection with $\Aut_{\J^\circ_{g,n}/S_n}(G)$, since $\tau \in \Aut_{\J^\circ_{g,n}/S_n}(G)$ determines a permutation $\tau_{\vert m}$ of $\{1, \dots, n\}$ by restricting along the injective marking $m: \{1, \dots, n\} \to G$.  Hence we get, by collecting the two inner summations into one,
  \begin{equation*}
    z_g = \sum_n \frac1{n!} \sum_{G \in \pi_0(\J^\circ_{g,n})} \frac1{|\Aut_{\J^\circ_{g,n}}(G)|}
    \sum_{\tau \in \Aut_{\J^\circ_{g,n}/S_n}(G)} (-1)^{|E(G)|} \sgn(\tau_E)\psi(\tau_{\vert m}).
  \end{equation*}
  The forgetful functor $v: \J^\circ_{g,n} \to \J^\circ_{g,n}/S_n$ satisfies $v_*v^* 1 = n!$, since the comma category $(v\downarrow G)$ is equivalent to an $n!$ element set.  That is,  
  $$\sum_{G\in \pi_0(\J^\circ_{g,n})} \frac{1}{|\!\Aut_{\J^\circ_{g,n}}(G)|} = 
  n!\!\sum_{G\in{\pi_0(\J^\circ_{g,n}/S_n)}}\frac{1}{|\!\Aut_{\J^\circ_{g,n}/S_n}(G)|}.$$
Therefore
  \begin{align*}
    z_g & = \sum_n \sum_{G \in \pi_0(\J^\circ_{g,n}/S_n)} \frac1{|\Aut_{\J^\circ_{g,n}/S_n}(G)|}
          \sum_{\tau \in \Aut_{\J^\circ_{g,n}/S_n}(G)} (-1)^{|E(G)|} \sgn(\tau_E)\psi(\tau_{\vert m})\\
    & = \sum_n \int_{\mathrm{Fun}(\ZZ,\J^\circ_{g,n}/S_n)} (-1)^{|E(G)|} \sgn(\tau_E)\psi(\tau_{\vert m}) \in \widehat\Lambda,
  \end{align*}
 by Example~\ref{example:free-loops}. 

  Let us write $\sss_0(G,\tau) = (-1)^{|E(G)|} \sgn(\tau_E)$ and for later use point out that $\sss_0(G,\tau)$ equals $(-1)^{|E(G)/\tau|}$, where $E(G)/\tau$ denotes the set of orbits for the action of $\langle\tau\rangle$ on $E(G)$.  If we also write $\ttt_0(G,\tau) = \psi(\tau_{\vert m})$, we have proved that $z_g$ is the orbisum of the function
  \begin{equation*}
    \sss_0 \cdot \ttt_0: \pi_0\bigg(\coprod_{n = 0}^\infty \mathrm{Fun}(\ZZ,\J^\circ_{g,n}/S_n)\bigg) \to \widehat \Lambda.
  \end{equation*}

  \medskip

  For each object $(G,m)$ of $\J^\circ_{g,n}/S_n$ we may forget the marking $m: \{1, \dots, n\} \to V(G)$: the resulting graph $G$ may not be stable, but it may be stabilized by smoothing all 2-valent vertices, leading to a functor
  \begin{equation*}
    u_n: \J^\circ_{g,n}/S_n \to \J^\circ_g,
  \end{equation*}
  which in turn induces functors $U_n = L(u_n): \mathrm{Fun}(\ZZ,\J^\circ_{g,n}/S_n) \to \mathrm{Fun}(\ZZ,\J^\circ_g)$ as in Lemma~\ref{lem:fiber-free-loop-space}.  Taking coproduct over all $n$, we assemble to two ``forget markings'' functors
  \begin{equation}\label{eq:5}
        u: \coprod_n \J^\circ_{g,n}/S_n \to \J^\circ_{g},\qquad U: \coprod_n \mathrm{Fun}(\ZZ,\J^\circ_{g,n}/S_n) \to \mathrm{Fun}(\ZZ,\J^\circ_{g})
  \end{equation}
  to which we will apply Lemma~\ref{lem:orbi-counting} (see also Remark~\ref{rem:inf}).  

  The groupoid $(u \downarrow G)$ is equivalent to a discrete groupoid, namely the set
  \begin{equation*}
    N(G) = \{h\colon (V(G) \amalg E(G)) \to \ZZ_{\geq 0} \mid \text{$h(v) \leq 1$ for all $v \in V(G)$}\},
  \end{equation*}
  regarded as a discrete groupoid, where the function $h_x$ associated to an object $x = ((G',m) \in \J_{g,n}^\circ/S_n, \phi: u_n(G',m) \to G)$ records the cardinalities of the inverse image of the function $m: \{1, \dots, n\} \to V(G') \to V(G) \amalg E(G)$.  By Lemma~\ref{lem:fiber-free-loop-space}, the groupoid $(U \downarrow (G,\tau))$ may be identified with the set $N(G)^{\langle\tau\rangle}$ of $\tau$-invariant such functions.
  
  For brevity, let us in the rest of this proof write $V = V(G)$ and $E = E(G)$, and let us write $E = E_+ \amalg E_-$ where $E_-$ consists of the edges that are reversed by some power of $\tau$ (i.e., fixed points of $\tau^i_E$ that do not lift to fixed points of $\tau_H^i$ for some $i \in \ZZ$) and $E_+$ are those that are not.  To each $\tau$-invariant function $h \in N(G)$ we may associate two other functions
  \begin{equation}\label{eq:11}
    \begin{aligned}
      a_h: (V \amalg &E_-)/\langle\tau\rangle \to \{0,1\}\\
      b_h: (E_+ \amalg &E_-)/\langle\tau\rangle \to \ZZ_{\geq 0},
    \end{aligned}
  \end{equation}
  defined by the requirement that $h(v) = a_h([v])$ for $v \in V$, $h(e) = b_h([e])$ for $e \in E_+$, and $h(e) = 2b_h([e]) + a_h([e])$ for $e \in E_-$.  This sets up a bijection
  \begin{equation}\label{eq:12}
    \begin{aligned}
      N(G)^{\langle\tau\rangle} &\to \{0,1\} ^{(V \amalg E_-)/\langle\tau\rangle} \times \ZZ_{\geq0} ^{(E_+ \amalg E_-)/\langle\tau\rangle}\\
      h & \mapsto (a_h,b_h).
    \end{aligned}
  \end{equation}
  Subdividing $b([e])$ many times each edge in the $\tau_E$-orbit of an $e \in E_+$ will change the number of $\tau_E$-orbits by precisely $b([e])$, while subdividing $2b([e]) + a([e])$ many times each edge in the $\tau_E$-orbit of an $e \in E_-$ changes the number of $\tau_E$-orbits by $b([e])$.  Hence the composition
  \begin{equation*}
    \{0,1\} ^{(V\amalg E_-)/\langle\tau\rangle} \times \ZZ_{\geq 0} ^{(E_+ \amalg E_-)/\langle\tau\rangle} \to     N(G)^{\langle\tau\rangle} \simeq \, (U \downarrow G) \to \coprod_n \J_{g,n}^\circ/S_n \xrightarrow{\alpha_0} \ZZ^\times
  \end{equation*}
  agrees with
  \begin{equation*}
    (a,b) \mapsto \sss_0(G,\tau) \prod_{x \in E/\tau} (-1)^{b(x)}.
  \end{equation*}
  By a similar argument, the composition 
  \begin{equation*}
        \{0,1\} ^{(V\amalg E_-)/\langle\tau\rangle} \times \ZZ_{\geq 0} ^{(E_+ \amalg E_-)/\langle\tau\rangle} \to     N(G)^{\langle\tau\rangle} \simeq \, (U \downarrow G) \to \coprod_n \J_{g,n}^\circ/S_n
        \xrightarrow{\beta_0} \widehat \Lambda
  \end{equation*}
  may be written
  \begin{equation*}
    (a,b) \mapsto \prod_{x \in (V\amalg E_-)/\langle\tau\rangle} p_{|x|}^{a(x)} \prod_{x\in E_+/\tau} p_{|x|}^{b(x)} \prod_{x\in E_-/\tau} p_{2|x|}^{b(x)},
  \end{equation*}
 where we have written $|x|$ for the size of a $\tau$-orbit $x$, when regarded as a subset $x \subset V \amalg E$.   Hence we get
  \begin{equation*}
    (U_*(\sss_0 \cdot \ttt_0))(G,\tau) = \sss_0(G,\tau) \sum_{(a,b)} \prod_{x \in  (V\amalg E_-)/\langle\tau\rangle} p_{|x|}^{a(x)} \prod_{x\in E_+/\tau} (-p_{|x|})^{b(x)} \prod_{x\in E_-/\tau} (-p_{2|x|})^{b(x)}.
  \end{equation*}
   By collecting terms, we get
  \begin{align*}
    (U_*(\sss_0 \cdot \ttt_0))(G,\tau) &= \sss_0(G,\tau) \prod_{x \in (V\amalg E_-)/\tau}(1 + p_{|x|}) \prod_{x \in E_+/\tau}\sum_{i = 0}^\infty (-p_{|x|})^i \prod_{x \in E_-/\tau}\sum_{i = 0}^\infty (-p_{2|x|})^i\\
    &= \sss_0(G,\tau) \frac{\prod_{x \in (V \amalg E_-)/\tau}(1 + p_{|x|})}{\prod_{x \in E_+/\tau}(1 +p_{|x|}) \prod_{x \in E_-/\tau}(1 + p_{2|x|})}.
  \end{align*}
  After multiplying denominator and numerator by $\prod_{x \in E_+/\tau} (1 + p_{|x|})$, we recognize this as
  \begin{equation*}
    (U_*(\sss_0 \cdot \ttt_0))(G,\tau) = \sss_0(G,\tau) \frac{P(\tau_V) P(\tau_E)}{P(\tau_H)}.
  \end{equation*}

  Now, Lemma~\ref{lem:orbi-counting} and Example~\ref{example:free-loops} imply
  \begin{equation*}
    z_g = \sum_{G \in \pi_0(\J_g^\circ)} \frac1{|\Aut(G)|} \sum_{\tau \in \Aut(G)} U_*(\sss_0 \cdot \ttt_0),
  \end{equation*}
  which finishes the proof of Proposition~\ref{prop:start}.
\end{proof}

The conceptual significance of the sets $(V \amalg E_-)/\langle\tau\rangle$ and $(E_+ \amalg E_-)/\langle\tau\rangle$ associated to $(G,\tau)$ is suggested by considering the quotient $|G| \to |G|/\langle\tau\rangle$ by the action of $\langle\tau\rangle$ on the topological space $|G|$.  The quotient is a ``topological graph'' in the sense that there exists a homeomorphism $|G|/\langle\tau\rangle \approx |X|$ for some graph $X$.  We will see next that there is a canonical way to choose $X$ so that there are canonical bijections of sets $V(X) \cong (V \amalg E_-)/\langle\tau\rangle$ and $E(X) \cong (E_+ \amalg E_-)/\langle\tau\rangle$.  In fact a graph $X$ with these properties may be functorially associated to $(G,\tau)$; the next step in our proof of Theorem~\ref{thm:faber-conj} exploits this.

\section{From graphs with automorphisms to orbigraphs} \label{sec:orbigraphs}

Let $G \in \J_g^\circ$ be an object, $\tau\in \Aut_{\J_g^\circ}(G)$, and let $E_-(G) \subset E(G)$ consist of those edges that are reversed by some power of $\tau$ (as in the proof of Proposition~\ref{prop:start}).  Let $G'$ be the graph obtained by barycentrically subdividing each $e \in E_-(G)$ once.  Then we have a $\langle\tau\rangle$-equivariant homeomorphism $|G| \cong |G'|$, but no power of $\tau$ reverses any edge of $G'$.  Define a new graph $X=X(G,\tau)$ by
\begin{equation}\label{eq:8}
  \begin{aligned}
  V(X) = V(G')/\langle\tau\rangle\\
  H(X) = H(G')/\langle\tau\rangle,
\end{aligned}
\end{equation}
and structure maps $s_X$ and $r_X$ induced from those on $G'$.  The fact that no power of $\tau$ reverses any edge of $G'$ implies that $s_X$ is again fixed-point free, and the property $s_X \circ s_X = \mathrm{Id}$ is inherited from $G'$.  Hence $X$ is indeed a graph.  We have canonical homeomorphisms
\begin{equation*}
  |G|/\langle\tau \rangle \approx |G'| /\langle\tau\rangle \approx |X|.
\end{equation*}
We shall also need the function
\begin{equation}\label{eq:7}
  f: V(X) \amalg E(X) \to \ZZ_{>0}
\end{equation}
that measures cardinality of inverse image under the quotient map
\begin{equation*}
  (V(G') \amalg E(G')) \to (V(G') \amalg E(G'))/\langle\tau\rangle = V(X) \amalg E(X).
\end{equation*}
It satisfies $f(r(x)) | f([x])$ for all $x \in H(X)$. 
The pair $(X,f)$ is an example of an orbigraph, defined as follows.
\defnow{
  An \emph{orbigraph} is a pair $(X,f)$ where $X$ is a graph and $f: V(X) \amalg E(X) \to \ZZ_{>0}$ is a function satisfying $f(r(x)) | f([x])$ for all $x \in H(X)$.  An isomorphism of orbigraphs $(X,f) \to (X',f')$ consists of bijections $V(X) \to V(X')$ and $H(X) \to H(X')$ compatible with all structure maps $r$, $s$, and $f$.
}

\newcommand{\OG}{\mathrm{OG}}

\defnow{
  An orbigraph $(X,f)$ is \emph{connected} if $X$ is connected, has \emph{genus} $g$ if $1-g = \sum_{x \in V(X)} f(x) - \sum_{x \in E(X)} f(x)$, and is \emph{stable} if it satisfies
  \begin{enumerate}[(i)]
  \item $\mathrm{val}_X(v) > 0$ for all $v \in V(X)$,
  \item if $\mathrm{val}_X(v) < 3$ then there exists an $h \in H(X)$ with $r(h) = v$ and $f([h]) > f(v)$.
  \end{enumerate}
  Let $\OG_g$ be the groupoid whose objects are stable, connected, genus $g$ orbigraphs, and whose morphisms are the isomorphisms of orbigraphs.
}

\exnow{
  If $(G,\tau) \in \mathrm{Fun}(\ZZ,\J_g^\circ)$, then the orbigraph $(X,f)$ defined by~(\ref{eq:8}) and~(\ref{eq:7}) is a stable connected orbigraph of genus $g$.  We shall denote it by $\mathcal{O}(G,\tau)$.  In this way we have defined a functor between groupoids
  \begin{equation*}
    \mathcal{O}: \mathrm{Fun}(\ZZ,\J_g^\circ) \to \OG_g
  \end{equation*}
}

The number $\chi(X,f) = \sum_{x \in V(X)} f(x) - \sum_{x \in E(X)} f(x)$ is the Euler characteristic of $G$ when $(X,f) = \mathcal{O}(G,\tau)$.  We emphasize that $\chi(X,f)$ is of course usually different from the Euler characteristic of the underlying graph $X$, which we shall denote $\chi(X)$.

\lemnow{\label{lem:function-on-orbigraphs}
  Let $(X,f) = \mathcal{O}(G,\tau)$ be the orbigraph associated to $(G,\tau) \in \mathrm{Fun}(\ZZ,\J_g^\circ)$.  Then
  \begin{equation*}
    (-1)^{|E(G)|} \sgn(\tau_{E(G)}) = (-1)^{|E(X)|}
  \end{equation*}
  and
  \begin{equation*}
    \frac{P(\tau_V) P(\tau_E)}{P(\tau_H)} = \frac{\prod_{x \in V(X)} P_{f(x)}}{\prod_{x \in E(X)}P_{f(x)}} = \prod_{d = 1}^\infty P_d^{\chi(X_d)},
  \end{equation*}
  where $X_d = f^{-1}(d) \subset V(X) \amalg E(X)$ and $\chi(X_d) = |X_d \cap V(X)| - |X_d \cap E(X)|$.
}
\begin{proof}
We have previously noted that $(-1)^{|E(G)|}\sgn(\tau_E) = (-1)^{|E(G)/\langle\tau\rangle|}$. 
But the cardinality of $E(G)/\langle\tau\rangle$ equals that of $E(X)$, establishing the sign.

  The formula for $\frac{P(\tau_V) P(\tau_E)}{P(\tau_H)}$ was already established as part of the proof of Proposition~\ref{prop:start}, since $V(X) = (V \amalg E_-)/\langle\tau\rangle$ and $E(X) = (E_+ \amalg E_-)/\langle\tau\rangle$, in the notation of that proof.
\end{proof}

\defnow{
  For $(X,f) \in \OG_g$, let $\mathcal{Q}(X,f) = (\mathcal{O} \downarrow(X,f))$ denote the comma category whose objects are pairs consisting of a $(G,\tau) \in \mathrm{Fun}(\ZZ,\J_g^\circ)$ and an isomorphism $\phi: \mathcal{O}(G,\tau) \cong (X,f)$.
}

\cornow{\label{cor:s-t-u}
  Let $\sss, \uuu: \pi_0(\OG_g) \to \QQ$, and $\ttt: \pi_0(\OG_g) \to \widehat \Lambda$ be the functions
  \begin{align*}
    \sss(X,f) &= (-1)^{|E(X)|}\\
    \ttt(X,f) &= \prod_{d = 1}^\infty P_d^{\chi(X_d)}\\
    \uuu(X,f) &= (\mathcal{O}_* 1)(X,f) = \sum_{x \in \pi_0(\mathcal{Q}(X,f))} \frac1{|\Aut(x)|}.
  \end{align*}
  Then $z_g = \int_{\OG_g} \sss \cdot \ttt \cdot \uuu \in \widehat\Lambda$, in the notation of \S\ref{sec:orbi-counting-adding}.
}
\begin{proof}
  We already saw that $z_g = \int_{\mathrm{Fun}(\ZZ,\J_g^\circ)} \sss_0 \cdot \ttt_0$, and that $\sss_0 = \mathcal{O}^* \sss$ and $\ttt_0 = \mathcal{O}^* \ttt$.  The claim now follows from (\ref{eq:13}).
\end{proof}

For later use, we give the following description of $\mathcal{Q}(X,f)$, although it will only be useful after further simplifications.  A $\ZZ$-set is a finite set $S$ with a specified action of $\ZZ$; or, equivalently, with a specified bijection $S \to S$.

\defnow{
Let $\mathrm{Orb}$ denote the category whose objects are finite sets with a specified transitive action of $\ZZ$, and whose morphisms are $\ZZ$-equivariant set maps.
}

\lemnow{\label{lem:comma-cate-orbigraph}
  For $(X,f) \in \OG_g$, let $\mathsf{P}(X)$ be the category whose object set is $V(X) \amalg E(X)$, and whose non-identity morphisms are in bijection with $H(X)$, where $x \in H(X)$ is regarded as a morphism from $[x] \in E(X)$ to $r(x) \in V(X)$.  (If $X$ has no loops, this is the ``poset of simplices'' in $X$, ordered by reverse inclusion).  An identification $\phi: \mathcal{O}(G,\tau) \cong (X,f)$ gives rise to a map $V(G') \amalg E(G') \to V(X) \amalg E(X)$ which by abuse of notation we shall also denote $\phi$.  To such an identification we associate a functor
  \begin{equation*}
    j: \mathsf{P}(X) \to \mathrm{Orb}
  \end{equation*}
  by sending $x \in V(X) \amalg E(X)$ to $\phi^{-1}(x) \subset V(G') \amalg E(G')$, regarded as a $\ZZ$-set by the action of $\tau$.  

  This association defines a fully faithful functor from $\mathcal{Q}(X,f)$ to the groupoid 
    \begin{equation}\label{eq:fun-sd-orb}
    \mathrm{Fun}_f(\mathsf{P}(X),\mathrm{Orb})^\sim
    \end{equation}
    whose objects are functors $j$ from $\mathsf{P}(X)$ to $\mathrm{Orb}$, satisfying $|j(x)| = f(x)$ for all $x \in V(X) \amalg E(X)$, and whose morphisms are natural isomorphisms of such functors.  
    }
\noindent The notation $\sim$ records that the morphisms of the category are natural isomorphisms as opposed to all natural transformations.

Figure~\ref{fig:cover} illustrates a genus 2 graph $G$ with an involution $\tau$ reversing each of the two loops, the associated orbigraph $\mathcal{O}(G,\tau)$, as well as the quotient map $|G| \to |G|/\langle\tau\rangle \cong |X|$.  In this case the category $\mathsf{P}(X)$ has five objects and looks like $(\bullet \leftarrow \bullet \rightarrow \bullet \leftarrow \bullet \rightarrow \bullet)$.  The inverse image in $|G|$ of $x \in |X| \cong |G|/\langle\tau\rangle$ is canonically identified with the value of the functor $j$ on the vertex or edge of $X$ given by $x$.  Informally, the lemma asserts that if we know all inverse images of $x \in |X|$ as $\ZZ$-sets, not just their cardinalities, then we may reconstruct $G$ up to canonical isomorphism of graphs.
\begin{figure}[h!]
\begin{tikzpicture}[my_node/.style={fill, circle, inner sep=1.75pt}, scale=1]
\begin{scope}[shift = {(0,0)}]
\draw (1.75,0) node {$\scriptstyle G$};
\node[my_node] (A) at (0,0){};
\draw[ultra thick] (A) to [out = 150, in = 210, looseness=30] (A);
\draw[ultra thick] (A) to [out = 30, in = 330, looseness=30] (A);
\end{scope}
\draw[->] (0,-0.35) to (0,-.85);
\begin{scope}[shift = {(0,-1.5)}]
\node[my_node, label=left:$\scriptstyle 1$] (A) at (-.75,0){};
\node[my_node, label=90:$\scriptstyle 1$] (B) at (0,0){};
\node[my_node, label=right:$\scriptstyle 1$] (C) at (.75,0){};
\draw (1.75,0) node {$\scriptstyle (X,f)$};
\draw[thin] (A)--(C);
\draw (0.37,-0.2) node {$\scriptstyle 2$};
\draw (-0.37,-0.2) node {$\scriptstyle 2$};
\end{scope}
\end{tikzpicture}
\caption{The orbigraph $(X,f) = \mathcal{O}(G,\tau)$, where $\tau$ flips both loops of $G$.}
\label{fig:cover}
\end{figure}
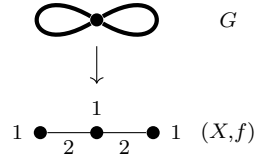

\begin{proof}
  The subdivided graph $G'$ from the definition of $\mathcal{O}(G,\tau)$ may be reconstructed from $j$ as
  \begin{align*}
    V(G') &= \coprod_{y \in V(X)} j(y)\\
    H(G') &= \coprod_{x \in H(X)} j([x]).
  \end{align*}
  The structure map $r: H(G') \to V(G')$ comes from functoriality of $j$, while $s = s_{G'}: H(G') \to H(G')$ uses some extra structure on $\mathsf{P}(X)$.  Namely, the map $s_X: H(X) \to H(X)$ associates to each $f: x \to y$ a morphism $s(f): x \to y'$ with the same source, which induces $s_{G'}: H(G') \to H(G')$.

  It is easily verified that this recipe gives a graph $G'$, with $\ZZ$-action induced from the actions on each $j(x)$, and a canonical isomorphism $G'/\langle\tau\rangle \cong X$ for any functor $j: \mathsf{P}(X) \to \mathrm{Orb}$ with the cardinality restriction as above.  The graph $G$ obtained from $G'$ by smoothing 2-valent vertices will be an object of $\J_g^\circ$ provided it is connected, and in this case we have produced an element of $\mathcal{Q}(X,f)$.  It is easily checked that this process gives an inverse functor to the one described in the lemma.
\end{proof}
The correspondence between $\mathcal{Q}(X,f)$ and the functors $j: \mathsf{P}(X) \to \mathrm{Orb}$ satisfying the cardinality condition is not essentially surjective, because some functors correspond to disconnected $G$.  In fact, we can say exactly what the essential image is.
\lemnow{\label{lem:essential-image}
  The above correspondence gives an equivalence of groupoids to the functors $j$ satisfying in addition that $\mathrm{colim}_{\mathsf{P}(X)} j$ is a singleton.
}
\begin{proof}
  Let $G'$ be the graph associated to the functor $j$, as in the above proof.  Then $E(G')$ may be identified with $\amalg_{x \in E(X)} j(x)$.  Up to canonical homeomorphism, the topological space $|G'| \cong |G|$ is obtained from $V(G') \amalg E(G')$ by gluing an edge for each element of $H(G')$.  Hence $\pi_0(|G|)$ is written as the coequalizer of two particular maps $H(G') \double V(G') \amalg E(G')$, which then by definition is $\mathrm{colim}_{\mathsf{P}(X)} j$.
\end{proof}

A useful condition under which $\cQ(X,f)$ and $\cQ(X',f')$ are equivalent categories is as follows.

\lemnow{\label{lem:half-open}
Let $(X,f) \in \mathrm{OG}_g$, $v$ a valence 2 vertex of $X$ with $r^{-1}(v) = \{h,h'\}$. Let $e=[h]$, $e'=[h']$, and $w = r(s(h))$, and suppose $f(e) = f(v)$.  Note the hypotheses imply that $e\ne e'$.

Let $(X',f')\in \mathrm{OG}_g$ be obtained by setting $r(h') = w$, deleting $v$ and $e$, and restricting $f$ to $(V(X)\amalg E(X)) \setminus \{v,e\}$. Then  $\cQ(X,f)$ and $\cQ(X',f')$ are equivalent groupoids.  See Figure~\ref{fig:half-interval}.
}

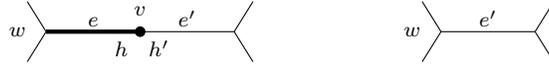
\begin{figure}[h]
\begin{tikzpicture}[v/.style={fill, circle, inner sep=0pt, minimum size=4pt}]
\begin{scope}[shift={(0,0)}, xscale=1.25,yscale=1]
\node[label=left:$\scriptstyle w$]  at (0,0){};
\draw[ultra thick] (0,0)  to (1,0);
\node[v, label=90:$\scriptstyle v$] at (1,0){};
\draw[thin] (0,0)  to (-0.2,0.4);
\draw[thin] (0,0)  to (-0.2,-0.4);
\node[label=90:$\scriptstyle e$] at (0.5,-.2){};
\node[label=90:$\scriptstyle e'$]at (1.5,-.2){};
\node[label=$\scriptstyle h$] at (0.8,-.6){};
\node[label=$\scriptstyle h'$] at (1.2,-.6){};
\draw[thin] (2,0)  to (1,0);
\draw[thin] (2,0)  to (2.2,0.4);
\draw[thin] (2,0)  to (2.2,-0.4);
\end{scope}
\begin{scope}[shift={(4,0)}, xscale=1.25,yscale=1]
\node[label=left:$\scriptstyle w$] (V') at (1,0){};
\node[label=90:$\scriptstyle e'$] (E') at (1.5,-.2){};
\draw[thin] (2,0)  to (1,0);
\draw[thin] (2,0)  to (2.2,0.4);
\draw[thin] (2,0)  to (2.2,-0.4);
\draw[thin] (1,0)  to (.8,0.4);
\draw[thin] (1,0)  to (.8,-0.4);
\end{scope}
\end{tikzpicture}
\caption{Figure accompanying Lemma~\ref{lem:half-open}, $X$ on the left and $X'$ on the right.}\label{fig:half-interval}
\end{figure}

\begin{proof}Define 
$$F\colon \on{Fun}_f(\mathsf{P}(X),\mathrm{Orb})^\sim \to \on{Fun}_{f'}(\mathsf{P}(X'),\mathrm{Orb})^\sim$$
as follows. For $j\in \on{Fun}_f(\mathsf{P}(X),\mathrm{Orb})^\sim$, we obtain $j'=F(j)$ from $j$ setting $j'(h') = j(s(h))\circ j(h)^{-1} \circ j(h')$ and restricting $j$ otherwise.  We used that a morphism $S \to S'$ of finite transitive $\ZZ$-sets is an isomorphism if and only if $|S| = |S'|$; therefore $j(h)$ is an isomorphism.

Then $j$ can be recovered from $j'$, up to natural isomorphism, by setting $j(e) \overset{j(h)}{=\joinrel=} j(v) = j'(e')/f(v)\ZZ$, using the canonical factorization $j'(e) \to j'(e')/f(v)\ZZ \to j'(w)$ of the morphism $j'(h')$ to define the two morphisms $j(h')$ and $j(s(h))$. Such a factorization exists since $f(w)|f(v)$.  Moreover $\mathrm{colim}_{\mathsf{P}(X)} j = \mathrm{colim}_{\mathsf{P}(X')} j'$ and in particular one is a singleton $\ZZ$-set if and only if the other is. Thus $F$ restricts to an equivalence of categories between $\cQ(X,f)$ and $\cQ(X',f')$.  
\end{proof}

\section{From orbigraphs to static orbigraphs}  \label{sec:static}

We now introduce a notion of exhalations and inhalations for orbigraphs, and show that the total contribution to $z_g$ coming from orbigraphs that admit a nontrivial exhalation or inhalation is zero.  This will reduce our computation of $z_g$ to a sum over \emph{static orbigraphs}, i.e., those that admit no nontrivial exhalations or inhalations.

\defnow{Let $(X,f) \in \OG_g$.  An edge $e \in E(X)$ is \emph{exhalable} if its two endpoints $v, v' \in V(X)$ are distinct, if $f(v) = f(v') = f(e)$, and if at least one of $v,v'$ has valence $2$ and the other edge $e'$ at that vertex has $f(e') > f(e)$.  Let $\mathrm{Exh}(X,f)\subset E(X)$ be the set of exhalable edges.

  If $e \in \mathrm{Exh}(X,f)$ we define the \emph{exhalation} to be the orbigraph $(X',f')$ obtained by collapsing $e$ to a new vertex $\tilde{v}$ and setting $f(\tilde{v}) = f(e)$. See Figure~\ref{fig:inex}.  
}

\begin{figure}[h!]
\begin{tikzpicture}[v/.style={fill, circle, inner sep=0pt, minimum size=4pt}]
\begin{scope}[shift={(4.5,0)}]
\node[v, label=left:$\scriptstyle \tilde v$] (V) at (0,0){};
\draw[thin] (0,0) to (0.75,0);
\draw[thin] (0,0)  to (-0.2,0.4);
\draw[thin] (0,0)  to (-0.2,-0.4);
\node[label=90:$ $] (E) at (0.5,-0.2){};
\end{scope}
\begin{scope}[shift={(0,0)}]
\node[v, label=left:$\scriptstyle v$] (V) at (0,0){};
\draw[ultra thick] (0,0)  to (1,0);
\node[v, label=90:$\scriptstyle v'$] (V') at (1,0){};
\draw[thin] (1,0) to (1.75,0);
\draw[thin] (0,0)  to (-0.2,0.4);
\draw[thin] (0,0)  to (-0.2,-0.4);
\node[label=90:$\scriptstyle e$] (E') at (0.5,-0.2){};
\node[label=90:$ $] (E) at (1.5,-0.2){};
\end{scope}
\draw[<->] (2.5,0) to (3,0);
\draw[<->] (2.5,-2) to (3,-2);
\begin{scope}[shift={(4.5,-2)}]
\node[v, label=90:$\scriptstyle \tilde v$] (V) at (0,0){};
\draw[thin] (0,0) to (0.5,0);
\draw[thin] (0,0)  to (-0.5,0);
\end{scope}
\begin{scope}[shift={(0,-2)}]
\node[v, label=90:$\scriptstyle v$] (V) at (0,0){};
\draw[ultra thick] (0,0)  to (1,0);
\node[v, label=90:$\scriptstyle v'$] (V') at (1,0){};
\draw[thin] (1,0) to (1.5,0);
\draw[thin] (0,0)  to (-0.5,0);
\node[label=90:$\scriptstyle e$] (E') at (0.5,-0.2){};
\node[label=90:$ $] (E) at (1.5,-0.2){};
\end{scope}

\end{tikzpicture}
\caption{Exhaling, from left to right. Inhaling, from right to left.}\label{fig:inex}
\end{figure}
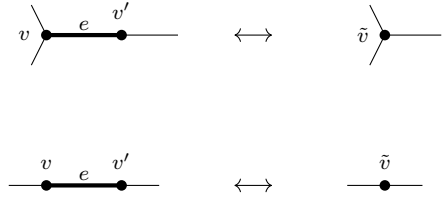

\lemnow{
  Let $(X',f')$ be the exhalation of $(X,f) \in \OG_g$ along some $e \in \mathrm{Exh}(X,f)$.  Then
  \begin{equation*}
    \mathrm{Exh}(X',f') = \mathrm{Exh}(X,f) \setminus \{e\},
  \end{equation*}
  and the exhalation of $(X',f')$ along some $e' \in \mathrm{Exh}(X',f')$ is canonically isomorphic to the exhalation performed in the other order.\qed
}
\defnow{
  For an orbigraph $(X,f)$, let $\mathrm{Ex}(X,f)$ be the orbigraph obtained by iterated exhalation of $(X,f)$ along all exhalable edges.  This defines a functor
  \begin{equation*}
    \mathrm{Ex}: \OG_g \to \OG_g
  \end{equation*}
  which we call the \emph{maximal exhalation}.
}
Recall that we wish to calculate $z_g = \int_{\OG_g} \sss \cdot \ttt \cdot \uuu$, in the notation of Corollary~\ref{cor:s-t-u}.  As we shall see, the function $ \sss \cdot \ttt \cdot \uuu$ simplifies drastically by pushing it forward along $\mathrm{Ex}$.  To study the comma categories $(\mathrm{Ex} \downarrow (X,f))$, we note that exhaling can be reversed:

\defnow{Let $(X,f)$ be an orbigraph.  
  \begin{enumerate}[(i)]
  \item An element $v \in V(X)$ is \emph{inhalable} if it has valence 2 in $X$ and $f(h) > f(v)$ for both half-edges $h \in H(X)$ incident to $v$.
  \item An element $h \in H(X)$ is inhalable if $v$ has valence at least 3 and if $f(h) > f(v)$, for the vertex $v = r(h)$ incident to $h$.
  \end{enumerate}
  In either case the \emph{inhalation} of $(X,f)$ along $h$ or $v$
  is the orbigraph $(X',f')$ obtained by expanding $v$ into an edge $e = vv'$, with $v'$ incident to $h$ and $v$ incident to the other half-edges at $v$, and extending $f$ to $f': V(X') \amalg E(X') \to \ZZ_{>0}$ by setting $f'(e) = f'(v')=f(v)$.  See again Figure~\ref{fig:inex}.

  Let $\mathrm{Inh}(X,f) \subset V(X) \amalg H(X)$ be the set of inhalable elements.
}

\lemnow{\label{lem:fiber-of-max-exh}
  Let $(X,f) \in \OG_g$.  The groupoid $(\mathrm{Ex}\downarrow (X,f))$  is empty unless $\mathrm{Exh}(X,f) = \emptyset$, in which case it is equivalent to the power set of $\mathrm{Inh}(X,f)$, regarded as a discrete category: the equivalence is defined by sending a subset to the orbigraph obtained by iterated inhalation along those elements.
}

See Figure~\ref{fig:Boolean} for an example where $|\mathrm{Inh}(X,f)| = 3$ so $(\mathrm{Ex} \downarrow (X,f))$ is equivalent to a set with $2^3 = 8$ elements, regarded as a discrete groupoid.

\begin{figure}
\begin{tikzpicture}[v/.style={fill, circle, inner sep=0pt, minimum size=4pt}]
\begin{scope}[shift={(0,0)}, scale=1]
\node[v, label=$ $] (a) at (0,0){};
\node[label=180:$\scriptstyle 1$] at (.2,0){};
\node[label=180:$\scriptstyle 1$] at (-.1,.6){};
\node[label=180:$\scriptstyle 1$] at (-.1,-.6){};
\node[label=0:$\scriptstyle 1$] at (.6,0){};
\node[label=90:$\scriptstyle 2$] at (.4,-.2){};
\node[label=0:$\scriptstyle 2$] at (-.4,.4){};
\node[label=0:$\scriptstyle 2$] at (-.4,-.4){};
\node[v] (b) at (.8,0){};
\node[v] (c) at (-0.3,.6){};
\node[v] (d) at (-0.3,-.6){};
\draw[thin] (a) to (b);
\draw[thin] (a) to (c);
\draw[thin] (a) to (d);
\end{scope}
\begin{scope}[shift={(-2,1.5)}, scale=.7]
\node[v, label=$ $] (a) at (0,0){};
\node[label=180:$ $] at (.2,0){};
\node[v, label=0:$ $] (b) at (.8,0){};
\node[v, label=90:$ $] (c) at (-0.5,1){};
\node[v, label=90:$ $] (c') at (-0.25,.5){};
\node[v, label=270:$ $] (d) at (-0.3,-.6){};
\draw[thin] (a) to (b);
\draw[ultra thick] (a) to (c');
\draw[thin] (a) to (c);
\draw[thin] (a) to (d);
\end{scope}
\begin{scope}[shift={(-.2,1.6)}, scale=.7]
\node[v, label=$ $] (a) at (0,0){};
\node[label=180:$ $] at (.2,0){};
\node[v, label=0:$ $] (b') at (.6,0){};
\node[v, label=0:$ $] (b) at (1.2,0){};
\node[v, label=90:$ $] (c) at (-0.3,.6){};
\node[v, label=270:$ $] (d) at (-0.3,-.6){};
\draw[thin] (a) to (b);
\draw[ultra thick] (a) to (b');
\draw[thin] (a) to (c);
\draw[thin] (a) to (d);
\end{scope}
\begin{scope}[shift={(1.75,1.75)}, scale=.7]
\node[v, label=$ $] (a) at (0,0){};
\node[label=180:$ $] at (.2,0){};
\node[v, label=0:$ $] (b) at (.8,0){};
\node[v, label=90:$ $] (c) at (-0.3,.6){};
\node[v, label=270:$ $] (d) at (-0.5,-1){};
\node[v, label=90:$ $] (d') at (-0.25,-.5){};
\draw[thin] (a) to (b);
\draw[ultra thick] (a) to (d');
\draw[thin] (a) to (c);
\draw[thin] (a) to (d);
\end{scope}
\begin{scope}[shift={(-2,3)}, scale=.7]
\node[v, label=$ $] (a) at (0,0){};
\node[label=180:$ $] at (.2,0){};
\node[v, label=0:$ $] (b') at (.6,0){};
\node[v, label=0:$ $] (b) at (1.2,0){};
\node[v, label=90:$ $] (c) at (-0.5,1){};
\node[v, label=90:$ $] (c') at (-0.25,.5){};
\node[v, label=270:$ $] (d) at (-0.3,-.6){};
\draw[thin] (a) to (b);
\draw[ultra thick] (b') to (a) to (c');
\draw[thin] (a) to (c);
\draw[thin] (a) to (d);
\end{scope}
\begin{scope}[shift={(-.2,3.2)}, scale=.7]
\node[v, label=$ $] (a) at (0,0){};
\node[label=180:$ $] at (.2,0){};
\node[v, label=0:$ $] (b) at (.8,0){};
\node[v, label=90:$ $] (c) at (-0.5,1){};
\node[v, label=90:$ $] (c') at (-0.25,.5){};
\node[v, label=270:$ $] (d) at (-0.5,-1){};
\node[v, label=90:$ $] (d') at (-0.25,-.5){};
\draw[thin] (a) to (b);
\draw[ultra thick] (d') to (a) to (c');
\draw[thin] (a) to (c);
\draw[thin] (a) to (d);
\end{scope}
\begin{scope}[shift={(1.5,3.2)}, scale=.7]
\node[v, label=$ $] (a) at (0,0){};
\node[label=180:$ $] at (.2,0){};
\node[v, label=0:$ $] (b') at (.6,0){};
\node[v, label=0:$ $] (b) at (1.2,0){};
\node[v, label=90:$ $] (c) at (-0.3,.6){};
\node[v, label=270:$ $] (d) at (-0.5,-1){};
\node[v, label=90:$ $] (d') at (-0.25,-.5){};
\draw[thin] (a) to (b);
\draw[ultra thick] (b') to (a) to (d');
\draw[thin] (a) to (c);
\draw[thin] (a) to (d);
\end{scope}
\begin{scope}[shift={(-.2,5)}, scale=.7]
\node[v, label=$ $] (a) at (0,0){};
\node[label=180:$ $] at (.2,0){};
\node[v, label=0:$ $] (b') at (.6,0){};
\node[v, label=0:$ $] (b) at (1.2,0){};
\node[v, label=90:$ $] (c) at (-0.5,1){};
\node[v, label=90:$ $] (c') at (-0.25,.5){};
\node[v, label=270:$ $] (d) at (-0.5,-1){};
\node[v, label=90:$ $] (d') at (-0.25,-.5){};
\draw[thin] (a) to (b);
\draw[ultra thick] (d') to (a) to (c');
\draw[ultra thick] (b') to (a);
\draw[thin] (a) to (c);
\draw[thin] (a) to (d);
\end{scope}
\end{tikzpicture}
\caption{An instance of $(\mathrm{Ex} \downarrow (X,f))$}
\label{fig:Boolean}
\end{figure}

\begin{proof}
  An object of $(\mathrm{Ex}\downarrow (X,f))$ consists of an $(X',f') \in \OG_g$ and an isomorphism $\mathrm{Ex}(X',f') \cong (X,f)$.  Each $e \in \mathrm{Exh}(X,f)$  is collapsed to a vertex $v'$ in $X$.  If the valence of $v'$ is at least 3, it comes with a distinguished incident half-edge $h$, corresponding to the 2-valent vertex of $e$.  Hence we associate to $e$ an element of $\mathrm{Inh}(X,f)$, either $v'$, if it is 2-valent, or $h$.  By inhaling all these elements, we get back an orbigraph with a canonical isomorphism to $(X',f')$: the identity map of the non exhaled/inhaled elements of $H(X',f') \amalg V(X',f')$ extends uniquely to an isomorphism.
\end{proof}

Note the following special case of Lemma~\ref{lem:half-open}.

\lemnow{\label{lem:homework}
  For any orbigraph $(X,f)$, there is a (canonical) equivalence of categories
  \begin{equation*}
    \mathcal{Q}(X,f) \cong \mathcal{Q}(\mathrm{Ex}(X,f)).
  \end{equation*}
}

\cornow{
  Let $\sss$, $\ttt$, and $\uuu$ be as in Corollary~\ref{cor:s-t-u}.  Then
  \begin{equation*}
    (\mathrm{Ex}_*( \sss \cdot \ttt \cdot \uuu))(X,f) = 0
  \end{equation*}
  if $(X,f)$ admits an inhalation or an exhalation.
}
\begin{proof}
  If $(X,f)$ admits an exhalation then it is not in the essential image of $\mathrm{Ex}$.  Hence that case is obvious, and we assume $(X,f) = \mathrm{Ex}(X,f)$.
  
  Lemma~\ref{lem:homework} shows that $\uuu$ is invariant under exhalations, and it is easy to check that $\ttt$ is also invariant under exhalations: this is because $\ttt(X,f)$ depends only on the numbers $\chi(X_d)$ and these are preserved by exhalations.  Hence $\ttt \cdot \uuu = \mathrm{Ex}^*(\ttt \cdot\uuu)$, so by~\eqref{eq:13}
  \begin{equation*}
    (\mathrm{Ex}_*( \sss \cdot \ttt \cdot \uuu)) = (\ttt \cdot \uuu) \cdot (\mathrm{Ex}_*\sss). 
  \end{equation*}
  The function $\sss$ is not preserved by inhalations, and in fact it changes sign once for each inhalation.  Therefore Lemma~\ref{lem:fiber-of-max-exh} implies that
  \begin{equation*}
    (\mathrm{Ex}_* \sss)(X,f) = \sss(X,f) \sum_{S \subset \mathrm{Inh}(X,f)} (-1)^{|S|},
  \end{equation*}
  which is zero when $\mathrm{Inh}(X,f) \neq \emptyset$.
\end{proof}

\defnow{
  An orbigraph $(X,f)$ is \emph{static} if it admits neither
  exhalations nor inhalations.  Let $\OG_g^\mathrm{stat} \subset \OG_g$ be the full subcategory consisting of static orbigraphs.}
\cornow{\label{cor:reduction-to-static}
  Let $\sss$, $\ttt$, and $\uuu$ be as in Corollary~\ref{cor:s-t-u}.  Then
  \begin{equation*}
    z_g = \int_{\OG_g^\mathrm{stat}}  \sss \cdot \ttt \cdot \uuu.
  \end{equation*}
}

\section{Static orbigraphs}\label{sec:static-structure}

We now give a structural classification of static orbigraphs.

\defnow{
  A \emph{tail} of an orbigraph $(X,f)$ is a finite sequence $(h_0, \dots, h_k)$ of elements of $H(X)$ with $k \in \ZZ_{\ge 0}$, such that
  \begin{enumerate}
  \item $v_0 = r(h_0)$ is 1-valent, and
  \item $v_i = r(h_{i})$ is 2-valent and equals $r \circ s(h_{i-1})$, for $i = 1, \dots, k$, and
  \item $f(v_i) = f(e_{i-1})$, where $e_i = \{h_i,s(h_i)\} \in E(X)$, for $i = 0, \dots, k-1$.
  \end{enumerate}
  In that case we write $d = f(v_0)$ and $x_i = f(v_i)$ for $i = 1, \dots, k$.  We necessarily have $d | x_1 | \dots | x_k$.

  The \emph{length} of the tail is the number $k$.  The tail is \emph{maximal} if it cannot be extended to a longer tail $(h_0, \dots, h_k, h_{k+1})$.
}

It is easily seen that any tail is part of a unique maximal tail, and that distinct maximal tails $(h_0, \dots, h_k)$ and $(h'_0, \dots, h'_{k'})$ are disjoint. 

\defnow{
  The \emph{cropping} of a tail $(h_0,\dots, h_k)$ of an orbigraph $(X,f)$ is the orbigraph $(X',f')$ obtained by removing $\{v_0, \dots, v_{k-1}\}$ from $V(X)$ and $\{h_0, s(h_0), \dots, h_{k-1}, s(h_{k-1})\}$ from $H(X)$ and defining $f'$ by setting $f'(v_{k}) = d$, where $d = f(v_0)$, and restricting $f$ otherwise.
}
In particular, cropping a tail of length zero results in an orbigraph canonically isomorphic to the input.  Cropping a maximal tail leaves a length-zero tail, ending in a 1-valent vertex.  
The original orbigraph $(X,f)$ may be reconstructed up to canonical isomorphism from the data of $(X',f')$, the marked point $v_k \in V(X')$, and the sequence $d | x_1 | \dots | x_k = f'(v_k)$.  Notice that cropping preserves the number $\chi(X,f) = \sum_{x \in V(X)} f(x) - \sum_{x \in E(X)} f(x)$.

Lemma~\ref{lem:half-open}, and induction on length of tail, immediately give the following; compare with Lemma~\ref{lem:homework} for exhalation.

\lemnow{\label{lem:crop}
  If $(X',f')$ is obtained from $(X,f)$ by cropping a tail, then there is an equivalence of groupoids
  \begin{equation*}
    \mathcal{Q}(X,f) \simeq \mathcal{Q}(X',f').
  \end{equation*}
}

\begin{figure}
\begin{tikzpicture}[v/.style={fill, circle, inner sep=0pt, minimum size=0pt}, v2/.style={fill, circle, inner sep=0pt, minimum size=3.5pt}]
\draw[thin] (0,0) circle [radius=1];
\node[v2] (a) at (2,0){};
\node[v2] (a2) at (3,0){};
\node[v2] (a3) at (4,0){};
\draw[thin] (1,0) to (a);
\draw[thick] (a) to (a2);
\draw[thick] (a3) to (a2);
\node[] at (2,0.2) {$\scriptstyle v_2$};
\node[] at (2.5,0.15) {$\scriptstyle v_2$};
\node[] at (3,0.2) {$\scriptstyle v_1$};
\node[] at (3.5,.15) {$\scriptstyle v_1$};
\node[] at (4,0.2) {$\scriptstyle d$};
\node[v2] (b) at (-1.25,1.1){};
\node[v2] (b2) at (-1.75,1.2){};
\draw[thin] (-.707,.707) to (b);
\draw[thick] (b) to (b2);
\node[v] (c) at (-.939,-.342){};
\node[v] (c2) at (-1.4095,-0.5130){};
\node[v2] (c3) at (-2,-.6){};
\node[v2] (c4) at (-1.5,-.9){};
\draw (c) to (c2) to (c3);
\draw (c2) to (c4);
\end{tikzpicture}
\caption{A static orbigraph with maximal tails of lengths $0,0,1,2$.}\label{fig:static-old}
\end{figure}
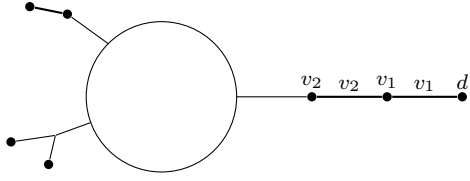

The following proposition classifies static orbigraphs.
\propnow{\label{prop:structure-of-static}
Let $(X,f)$ be an orbigraph, and let $(X',f')$ be the orbigraph obtained by cropping all maximal tails of $(X,f)$.  Then $(X,f)$ is static if and only if $f'$ is constant away from any $1$-valent vertices of $X'$.
}
\proofnow{
Notice that cropping tails does not change the inhalable or exhalable sets.  Therefore we may assume $(X,f) = (X',f')$ has no positive length tail.

If $(X,f)$ admits an inhalation or exhalation, then $f$ cannot be constant away from $1$-valent vertices.
Now suppose $(X,f)$ is static. Consider the set of vertices $v\in V(X)$ of valence at least $2$, such that $f([h]) > f(v)$ for some $h\in H(X)$ with $r(h)=v$. It suffices to show this set is empty. Suppose not, and choose a vertex $v$ in that set, with $f(v)$ minimal.  Now if $v$ has valence at least $3$ then $h$ would be inhalable. Therefore $v$ has valence $2$.  The other half-edge $h'$ at $v$ must have $f([h'])=f(v)$, otherwise $v$ inhales.  Set $v' =r(s(h'))$.  Now if $f(v') = f([h'])$ then $[h']$ exhales. Therefore $f(v')<f([h']) = f(v).$  This contradicts the choice of $v$ unless $v'$ is $1$-valent. But then $(X,f)$ has a positive length tail, contradiction.
}

\defnow{\label{def:reduction-functor}
  A static orbigraph is called \emph{reduced} if all maximal tails have length 0.  Let $\OG_g^{\mathrm{stat},\mathrm{red}} \subset \OG_g^\mathrm{stat}$ be the full subcategory on those objects.  Let
  \begin{equation*}
    \mathcal{R}: \OG_g^{\mathrm{stat}} \to \OG_g^{\mathrm{stat},\mathrm{red}}
  \end{equation*}
  be the functor which sends sends $(X,f)$ to the orbigraph obtained by cropping all maximal tails of $(X,f)$.
}

The following lemma gives a 
decomposition of the groupoid $\OG_g^{\mathrm{stat},\mathrm{red}}$, corresponding to the sum in Theorem~\ref{thm:faber-conj}.

\begin{lemma}\label{lem:stat-red-coprod}
  For $m > 0$ and $r,s \geq 0$, and $d = (d_1, \dots, d_s) \in \ZZ_{>0}^s$ and $a = (a_1, \dots, a_s) \in \ZZ_{>0}^s$, let
  \begin{equation*}
    \OG^{\mathrm{stat},\mathrm{red}}_{(g,m,r,s,a,d)} \subset \OG_g^{\mathrm{stat},\mathrm{red}}
  \end{equation*}
  be the full subgroupoid consisting of static, reduced orbigraphs $(X,f)$ with $\chi(X,f) = 1-g$ and $\chi(X) = 1-r$, such that the maximal value of $f$ is $m$ and the values of $f$ on 1-valent vertices of $(X,f)$ are $d_1 < \dots < d_s$, with multiplicity $a_i = |f^{-1}(d_i)|$.  Then the natural functor
  \begin{equation*}
    \coprod_{(k,m,r,s,a,d)} \OG^{\mathrm{stat},\mathrm{red}}_{(g,m,r,s,a,d)} \to \OG_g^{\mathrm{stat},\mathrm{red}},
  \end{equation*}
  is an equivalence of groupoids, where $(k,m,r,s,a,d)$ runs over all tuples satisfying the conditions~(\ref{eq:di-divides})--(\ref{eq:chi-upstairs}) from Theorem~\ref{thm:faber-conj}.  (The number $k$ is determined by the rest of the data, but its existence is a condition.)
\end{lemma}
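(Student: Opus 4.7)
The plan is to show that the numerical invariants $(k,m,r,s,a,d)$ attached to each $(X,f) \in \OG_g^{\mathrm{stat},\mathrm{red}}$ are well-defined, preserved by isomorphism, and automatically satisfy conditions \eqref{eq:di-divides}--\eqref{eq:chi-upstairs}. Once this is in hand, the functor in the statement is manifestly an equivalence of groupoids: each object lies in exactly one piece of the coproduct, the subgroupoids $\OG^{\mathrm{stat},\mathrm{red}}_{(g,m,r,s,a,d)}$ are full in $\OG_g^{\mathrm{stat},\mathrm{red}}$ by definition, and isomorphisms of orbigraphs preserve the listed invariants by inspection.

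First, I would apply Proposition~\ref{prop:structure-of-static} to a reduced static orbigraph (so there are no tails to crop): $f$ is then constant with value $m := \max f$ on all edges and on all vertices of valence at least two, and its values at 1-valent vertices form the multiset $\{d_1 < \cdots < d_s\}$ with multiplicities $a_i$. The divisibility $d_i \mid m$ in \eqref{eq:di-divides} is immediate from the orbigraph axiom $f(r(h)) \mid f([h])$ at a 1-valent vertex, whose incident half-edge has $f$-value $m$. The strict inequality $d_i < m$ follows from stability condition (ii) at a 1-valent vertex, which demands some incident half-edge with $f$-value strictly greater than $f(v) = d_i$.

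Conditions \eqref{eq:chi-downstairs} and \eqref{eq:chi-upstairs} then come from a direct computation. Writing $V_{\geq 2}$ for the set of vertices of valence at least two, one has
\begin{equation*}
1 - r = \chi(X) = |V_{\geq 2}| + \textstyle\sum_i a_i - |E(X)|, \qquad 1 - g = \chi(X,f) = m|V_{\geq 2}| + \textstyle\sum_i a_i d_i - m|E(X)|.
\end{equation*}
Eliminating $|V_{\geq 2}| - |E(X)|$ yields $\sum_i a_i d_i + (g-1) = m\bigl(\sum_i a_i + r - 1\bigr)$, and setting $k := \sum_i a_i + r - 1$ reproduces both identities. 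Note that $m > 0$ is trivial and $k$ is an integer by construction.

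The main (and essentially only) obstacle is verifying $k \geq 1$, i.e., ruling out the cases $\sum_i a_i + r \leq 1$: namely $(r,s) = (1,0)$, $(r,s) = (0,0)$, and $(r,s,a_1) = (0,1,1)$. When $s = 0$, the function $f$ is globally constant $m$, so stability forces every vertex of $X$ to have valence $\geq 3$; combined with the handshake bound $2|E(X)| \geq 3|V(X)|$ and $|V(X)| - |E(X)| = 1-r$, this gives $|V(X)| \leq 2(r-1)$, which forces $|V(X)| = 0$ for $r \in \{0,1\}$, contradicting connectedness. The remaining case demands a tree with exactly one leaf, impossible since every finite tree with at least one edge has at least two leaves while the one-vertex tree violates stability (i). This establishes $k \geq 1$ and completes the proof.
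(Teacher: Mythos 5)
Your proof is correct and follows essentially the same line as the paper's: split $\OG_g^{\mathrm{stat},\mathrm{red}}$ by the isomorphism-invariant tuple $(m,r,s,a,d)$, then verify the numerical conditions by computing $\chi(X)$ and $\chi(X,f)$ for a reduced static orbigraph and eliminating $|V_{\geq 2}|-|E(X)|$. The paper phrases the resulting identity $km = \sum_i a_i d_i + (g-1)$ as ``graph-theoretic Riemann--Hurwitz,'' interpreting $k$ as minus the Euler characteristic of $X$ relative to the tail points; you carry out the same arithmetic explicitly, and also spell out the divisibility $d_i \mid m$, the strict inequality $d_i < m$, and the positivity of $k$, none of which the paper's terse proof makes explicit.

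One simplification is worth flagging: the case analysis you give for $k \geq 1$ (handshake bound, trees have at least two leaves, etc.) is correct but unnecessary. Once you have derived $km = \sum_i a_i d_i + (g-1)$, the right-hand side is at least $g-1 \geq 1$, since $g \geq 2$ is the standing assumption of the paper, and $m \geq 1$; hence $k \geq 1$ immediately.
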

\begin{proof}
  It is obvious that the tuple $(m,r,s,a,d)$ is invariant under isomorphisms in $\OG_g^{\mathrm{stat},\mathrm{red}}$, so we get a splitting according to this data.  The number $k$ defined by~(\ref{eq:chi-downstairs}) may be interpreted as minus the Euler characteristic of $X$ relative to the tail points.  Since $|G| \to |G|/\langle\tau\rangle \cong |X|$ is an $m$-fold cover away from the tail points, we get~(\ref{eq:chi-upstairs}) by ``graph-theoretic Riemann--Hurwitz''.
\end{proof}

\section{Contribution from reduced static orbigraphs} \label{sec:static}

We have seen $z_g = \int_{\OG_g^\mathrm{stat}}  \sss \cdot \ttt \cdot \uuu$.  In this section we study some properties of the functions $\sss$, $\ttt$, and $\uuu$ with respect to the functor $\mathcal{R}: \OG_g^\mathrm{stat} \to \OG_g^{\mathrm{stat}, \mathrm{red}}$.
\begin{lemma}\label{lem:t-u-factor}
  For any static orbigraph $(X,f)$ we have
  \begin{align*}
    \ttt(X,f) &= \ttt(\mathcal{R}(X,f))\\
    \uuu(X,f) &= \uuu(\mathcal{R}(X,f)),
  \end{align*}
  i.e., $\uuu = \mathcal{R}^* \uuu$ and $\ttt = \mathcal{R}^* \ttt$, in the notation of \S\ref{sec:orbi-counting-adding}.   
\end{lemma}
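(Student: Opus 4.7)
The strategy is to reduce both identities to the case of a single tail cropping, since $\mathcal{R}$ is by definition the iterated cropping of all maximal tails. So it will suffice to prove, whenever $(X',f')$ is obtained from $(X,f) \in \OG_g^{\mathrm{stat}}$ by cropping one tail $(h_0,\ldots,h_k)$, the two equalities $\ttt(X,f) = \ttt(X',f')$ and $\uuu(X,f) = \uuu(X',f')$.

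The $\uuu$ identity is essentially free from the machinery already in place: Lemma~\ref{lem:crop} supplies an equivalence of groupoids $\mathcal{Q}(X,f) \simeq \mathcal{Q}(X',f')$, and since $\uuu(X,f) = \int_{\mathcal{Q}(X,f)} 1$ is a groupoid cardinality, equivalent groupoids yield the same value. Iterating over all maximal tails of $(X,f)$ then gives $\uuu = \mathcal{R}^*\uuu$.

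The $\ttt$ identity reduces to showing $\chi(X_c) = \chi(X'_c)$ for every positive integer $c$. I would introduce the notation $d = f(v_0)$ and $x_i = f(v_i)$ for $i = 1,\ldots,k$, using the tail conditions to record that $f(e_i) = x_{i+1}$ for $i = 0,\ldots,k-1$. Cropping removes the $k$ vertices $v_0,\ldots,v_{k-1}$ (carrying $f$-values $d,x_1,\ldots,x_{k-1}$) and the $k$ edges $e_0,\ldots,e_{k-1}$ (carrying $f$-values $x_1,\ldots,x_k$), and resets $f(v_k)$ from $x_k$ to $d$. A direct term-by-term calculation then gives, for each $c$,
\[
\Delta\chi(X_c) = -[d{=}c] - \sum_{i=1}^{k-1}[x_i{=}c] + \sum_{i=1}^{k}[x_i{=}c] + [c{=}d] - [c{=}x_k] = 0,
\]
so every factor $P_c^{\chi(X_c)}$ in the definition of $\ttt$ is preserved by a single cropping. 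Iterating yields $\ttt = \mathcal{R}^*\ttt$.

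The main (though modest) obstacle is simply the bookkeeping for $\ttt$. The key combinatorial observation is that along the interior of the tail each removed vertex $v_i$ pairs off with a removed edge $e_{i-1}$ carrying the matching $f$-value $x_i$, while the only unpaired contributions---that of $v_0$ on the vertex side (value $d$) and that of $e_{k-1}$ on the edge side (value $x_k$)---are exactly compensated by resetting $f'(v_k) = d$ in place of the old value $x_k$. Once this identity is in hand, the final decomposition of $z_g$ as an orbisum over reduced static orbigraphs follows by applying Lemma~\ref{lem:orbi-counting} and~\eqref{eq:13} to $\mathcal{R}$.
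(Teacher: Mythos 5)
Your proof is correct and follows the same approach as the paper: the $\uuu$ identity is an immediate consequence of Lemma~\ref{lem:crop}, and the $\ttt$ identity reduces to the invariance of the numbers $\chi(X_c)$ under cropping. The paper simply asserts the latter invariance, while you spell out the term-by-term cancellation along the tail; that calculation is right, and is exactly the argument implicit in the paper's one-line claim.
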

\begin{proof}
  The function $\ttt(X,f) = \prod_{d = 1}^\infty P_d^{\chi(X_d)}$ depends only on the numbers $\chi(X_d)$ which are invariant under cropping tails.  The number $\uuu(X,f)$ is the groupoid cardinality of $\mathcal{Q}(X,f)$, but by Lemma~\ref{lem:crop} this is preserved by cropping tails.
\end{proof}

We can also evaluate the functions $\ttt$ and $\uuu$ on reduced static orbigraphs, using the classification in Lemma~\ref{lem:stat-red-coprod}.

\propnow{\label{prop:u}
  Let $(X,f)$ be a static reduced orbigraph whose underlying graph has genus $r$ (i.e., $\chi(X) = 1-r$, while $\chi(X,f) = 1-g$), let $m \in \ZZ_{>0}$ be the (constant) value of $f$ away from the tails, let $\{d_1 < \dots < d_s\}$ be the set of values of $f$ on tails of $(X,f)$, and let $a_i = |f^{-1}(d_i)|$ be the multiplicities of those values.  Then we have
  \begin{equation*}
    \ttt(X,f) = P_m^{1 - r} \prod_{i = 1}^s \left(\frac{P_{d_i}}{P_m}\right)^{a_i}
  \end{equation*}
  and
  \begin{equation*}
    \uuu(X,f) = m^{r-1} \prod_{p | D} (1 - p^{-r}),
  \end{equation*}
  where the product is over all prime numbers $p$ dividing $D = \gcd(m,d_1, \dots, d_s)$.
}
\begin{proof}
  The formula for $\ttt(X,f) \in \widehat\Lambda$ comes directly from the formula in Corollary~\ref{cor:s-t-u}.

  The number $\uuu(X,f)\in\QQ$ is defined as the groupoid cardinality of $\mathcal{Q}(X,f)$, which we identify as in the proof of Lemma~\ref{lem:homework} with a full subcategory of
  \begin{equation}\label{eq:6}
    \mathrm{Fun}_f(\mathsf{P}(X),\mathrm{Orb})^\sim,
  \end{equation}
  the groupoid whose objects are functors $j: \mathsf{P}(X) \to \mathrm{Orb}$ satisfying $|j(x)| = f(x)$ for all $x$, and whose morphisms are natural isomorphisms of such functors.  Recall that in general, $\mathcal{Q}(X,f)$ is only a subgroupoid because objects of $\J_g^\circ$ are required to be connected. As in the proof of Lemma~\ref{lem:crop}, the values of such $j$ at the 1-valent vertices of $X$ are canonically determined by the values at the incident edges, up to unique isomorphism.  Away from the 1-valent vertices $f$ is constantly $m$, so all morphisms must go to isomorphisms between finite transitive $\ZZ$-sets of cardinality $m$.  Therefore the groupoid~(\ref{eq:6}) may be identified with the groupoid of $m$-fold cyclic covering spaces of $|X|$ (i.e., principal $\ZZ/m\ZZ$-bundles over $|X|$; given such a bundle the corresponding functor $j$ is given on objects by sending $x \in V(X) \amalg E(X)$ to the fiber over the corresponding point in $|X|$ and on morphisms by monodromy of the bundle).  After picking a basepoint in $|X|$ and a basis for the free group $\pi_1(|X|)$, the groupoid~(\ref{eq:6}) may therefore be identified with
  \begin{equation*}
    \mathrm{Fun}(F_r,\ZZ/m\ZZ),
  \end{equation*}
  where $F_r$ denotes the free group on $r$ letters and $\ZZ/m\ZZ$ the cyclic group with $m$ elements, regarded as groupoids with one object.  
The objects of $\mathrm{Fun}(F_r,\ZZ/m\ZZ)$ are identified with group homomorphisms $F_r\to \ZZ/m\ZZ$, and given $A,B\colon F_r\to\ZZ/m\ZZ$, there is a natural isomorphism from $A$ to $B$ for every $z\in\ZZ/m\ZZ$ such that $B(x) = zA(x)z^{-1}$ for every $x$.  Since $\ZZ/m\ZZ$ is abelian, $\pi_0(\mathrm{Fun}(F_r,\ZZ/m\ZZ)) = \on{Hom}_{\mathrm{Gp}}(F_r,\ZZ/m\ZZ)$ and the automorphism group of any functor is cyclic of order $m$.

  It remains to understand the connectivity condition: we only want covering spaces $q: P \to |X|$ whose total space $P$ becomes connected after taking the quotient by the action of $\langle\tau^{f(x)}\rangle$ on each fiber $q^{-1}(x)$, as $x \in |X|$ ranges over the 1-valent vertices.  Therefore such covering spaces are in correspondence with the set 
    \begin{equation}\label{eq:10}
    \{(z_1, \dots, z_r) \in (\ZZ/m\ZZ)^r \mid \text{the mod $D$ reductions of the $z_i$ generate $\ZZ/D\ZZ$}\},
  \end{equation}
  which has cardinality
  \begin{equation*}
    m^r \prod_{p | D} (1 - p^{-r}).
  \end{equation*}
The groupoid cardinality of the subgroupoid of~\eqref{eq:6} corresponding to $\mathcal{Q}(X,f)$ is obtained by dividing that quantity by $m$.
\end{proof}

Unlike $\ttt$ and $\uuu$, the number $\sss(X,f) = (-1)^{|E(X)|}$ is not invariant under cropping tails.  Instead we have the following.
\lemnow{\label{lem:s}
  Let $(X,f)$ be a reduced static orbigraph, let $m$ be the maximum value of $f$, and let $\{d_1 < \dots < d_s\}$ be the set of values of $f$ on $1$-valent vertices of $(X,f)$, and let $a_i = |f^{-1}(d_i)|$ be the multiplicities of those values. Then
  \begin{equation*}
    (\mathcal{R}_* \sss)(X,f) = \sss(X,f) \cdot \prod_{i=1}^s (-\mu(m/d_i))^{a_i},
  \end{equation*}
  where $\mu$ is the M\"obius function.
}
\begin{proof}
  The groupoid $(\mathcal{R}\downarrow (X,f))$ may be identified with the product, over $i=1,\ldots,s,$ of the sets of chains between $1$ and $m/d_i$ in the poset (divisors of $m/d_i$, divisibility), regarded as a discrete groupoid.  The function to be summed may be identified with $\sss(X,f)$ times the product of $(-1)^{1 + \text{length of $i$th chain}}$.

  It is well known that the sum, over chains from $1$ to $a$ of divisors of a natural number $a$, of $(-1)^\text{length of chain}$ gives the M\"obius function $\mu(a)$.
\end{proof}

We can also evaluate the number $\sss(X,f)$ for reduced static orbigraphs, by appealing to a calculation of a certain orbifold Euler characteristic, due to Kontsevich.
\begin{lemma}\label{lem:orbifold-euler}
  Let $\OG^{\mathrm{stat},\mathrm{red}}_{(g,m,r,s,a,d)} \subset \OG_g^{\mathrm{stat},\mathrm{red}}$ be as in Lemma~\ref{lem:stat-red-coprod}.  Then
  \begin{equation*}
    \int_{\OG^{\mathrm{stat},\mathrm{red}}_{(g,m,r,s,a,d)}} \sss =
    \frac{-1}{(a_1!) \dots (a_s)!} \frac{(r+n-2)!}{r!} \cdot B_r,
    \end{equation*}
    where $B_r$ is the Bernoulli number and $n=|a|=a_1+\cdots+a_s$.
\end{lemma}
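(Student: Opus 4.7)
The plan is to make the objects of $\OG^{\mathrm{stat},\mathrm{red}}_{(g,m,r,s,a,d)}$ completely explicit as certain decorated graphs, reduce to a sum over graphs with labeled legs, and finally appeal to a formula of Kontsevich for the resulting graphical orbifold Euler characteristic.

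First I would use Proposition~\ref{prop:structure-of-static} to unpack the groupoid. An object $(X,f) \in \OG^{\mathrm{stat},\mathrm{red}}_{(g,m,r,s,a,d)}$ is precisely a connected graph $X$ of first Betti number $r$ whose $n = a_1+\cdots+a_s$ leaves are partitioned into $s$ color classes of sizes $a_i$ (labeled by the integers $d_i$), with $f \equiv m$ away from those leaves. Stability then forces all non-leaf vertices to have valence at least $3$: a $2$-valent internal vertex $v$ would have $f(v) = m$ and $f([h]) = m$ for both incident half-edges $h$, so neither would satisfy $f([h]) > f(v)$, violating condition (ii) of the definition of stability. Isomorphisms of $(X,f)$ are graph isomorphisms preserving the coloring, and $\sss(X,f) = (-1)^{|E(X)|}$ depends only on the underlying graph.

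Next I would pass from colored to labeled legs. Let $\cL$ denote the groupoid whose objects are graphs as above but with a bijective labeling of the leaves by $\{1,\dots,n\}$ and no color constraint, and whose isomorphisms preserve the labeling. The functor $\cL \to \OG^{\mathrm{stat},\mathrm{red}}_{(g,m,r,s,a,d)}$ that assigns color $d_i$ to the block of labels $a_1+\cdots+a_{i-1}+1,\dots,a_1+\cdots+a_i$ and then forgets labels exhibits the target as a free quotient by $S_{a_1}\times\cdots\times S_{a_s}$. A standard orbit--stabilizer calculation, parallel to the treatment of $v\colon\J^\circ_{g,n}\to\J^\circ_{g,n}/S_n$ in the proof of Proposition~\ref{prop:start}, then yields
\[
\int_{\OG^{\mathrm{stat},\mathrm{red}}_{(g,m,r,s,a,d)}} \sss \;=\; \frac{1}{a_1!\cdots a_s!}\int_{\cL} (-1)^{|E(X)|}.
\]

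Finally, I would identify the integral over $\cL$ as an orbifold Euler characteristic computed by Kontsevich. The sum $\sum_{[X]}(-1)^{|E(X)|}/|\!\Aut_{\mathrm{lab}}(X)|$ over connected graphs of first Betti number $r$ with $n$ labeled leaves and internal vertices of valence $\geq 3$ is, up to sign, the orbifold Euler characteristic of a classical moduli space of metric graphs with hairs, a close relative of Culler--Vogtmann outer space. Kontsevich's evaluation of this quantity gives
\[
\int_{\cL} (-1)^{|E(X)|} \;=\; -\frac{(r+n-2)!}{r!}\,B_r,
\]
which combined with the previous display yields the lemma. The main obstacle is conventional: one must check carefully that Kontsevich's formula is stated for exactly the same class of graphs (connected, with internal valence $\geq 3$, and leaves labeled) and with the same sign convention on the Euler characteristic as the sum above, so that his result can be invoked as a black box.
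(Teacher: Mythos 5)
Your argument runs closely parallel to the paper's: both reduce the colored-tail orbisum to an orbisum over $n$ distinguishable data (your labeled leaves; the paper's vertex markings, obtained by deleting leaf edges and tracking the resulting $(-1)^n$ shift in edge count), pick up the factor $1/(a_1! \cdots a_s!)$, and then invoke a Bernoulli-number identity for the remaining orbifold Euler characteristic. Your structural description of objects of $\OG^{\mathrm{stat},\mathrm{red}}_{(g,m,r,s,a,d)}$ is correct, and the reduction to $\cL$ is sound. But two real gaps remain.

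The main gap is that the identity you invoke, $\int_{\cL}(-1)^{|E(X)|} = -\tfrac{(r+n-2)!}{r!}B_r$, is \emph{not} Kontsevich's. Kontsevich established only the $n=0$, $r \geq 2$ case, the orbifold Euler characteristic $-B_r/(r(r-1))$ of the moduli space of rank-$r$ graphs with no legs. Passing from $n=0$ to general $n$ is not a matter of checking sign or graph conventions so that a known result can be used as a black box; it is an additional statement. The paper proves it by an explicit induction on $n$ via edge-counting (Proposition~\ref{prop:chi-with-mark} in the Appendix), yielding $\int_{\J^{\mathrm{p}}_{r,n}}\sss = (-1)^{n+1}\tfrac{(r+n-2)!}{r!}B_r$, which upon multiplying by the $(-1)^n$ from the $n$ leaf edges gives your stated value. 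You would need either to carry out this induction (or an equivalent one directly in the legged setting) or to locate a reference that already covers the hairy case.

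The second gap is the boundary case $(r,n)=(0,2)$, where $2r-2+n = 0$ and the inductive formula's hypothesis fails. There the only object of the groupoid is a single edge joining the two $1$-valent vertices, with no internal vertex at all, so your structural picture (a core of internal vertices of valence at least $3$ with leaves attached) degenerates, and so does the comparison with $\J^{\mathrm{p}}_{r,n}$. The paper handles it separately by hand, checking both sides equal $-\tfrac12$ when $s=1$, $a_1=2$ and $-1$ when $s=2$, $a_1=a_2=1$.
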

\begin{proof}
First dispense with a special case that $(r,n) = (0,2)$. Then either $s=1$, $a_1=2$, and both sides are $-1/2$, or $s=2$, $a_1=a_2=1$, and both sides are $-1$.  

Otherwise, an object of $\OG^{\mathrm{stat},\mathrm{red}}_{(g,m,r,s,a,d)}$ has $n$ many tails, each of which consists of an edge ending in the tail point, which is a valence-1 vertex.  If we pick a total ordering of the set of the $a_1$ many tails with value $d_1$, a total ordering of the set of the $a_2$ many tails with value $d_2$, etc, we arrive at an object of the groupoid $\J_{r,n}^\mathrm{p}$ defined exactly as $\J_{r,n}^\circ$, except that we do not require the marking function $m: \{1, \dots, n\} \to V$ to be injective.  The superscript $p$ stands for ``pure,'' as in \cite{cgp-markedpoints-published}. 
    Accounting for the choices of orderings of marked points, and the $n$ many extra edges where the tails are attached, we therefore get
\begin{equation*}
  \int_{\OG^{\mathrm{stat},\mathrm{red}}_{(g,m,r,s,a,d)}} \sss = \frac{(-1)^{n}}{(a_1!) \dots (a_s)!} \int_{\J^{\mathrm{p}}_{r,n}} \sss = \frac{(-1)^{n}}{(a_1!) \dots (a_s)!} \sum_{(G,m) \in \J^{\mathrm{p}}_{r,|a|}} \frac{(-1)^{|E(G)|}}{|\Aut(G)|}.
\end{equation*}
The lemma now follows from the formula
\begin{equation}\label{eq:14}
  \sum_{(G,m) \in \J^{\mathrm{p}}_{r,n}} \frac{(-1)^{|E(G)|}}{|\Aut(G)|}
  = (-1)^{n+1} \frac{(r+n-2)!}{r!} \cdot B_r,
\end{equation}
valid when $2r-2+n> 0$.  The case $n = 0$, $r \geq 2$ of this formula was given by Kontsevich \cite{kontsevich-formal}; see \cite[\S7.1]{gerlits-euler} for a proof.  The general case is easily deduced by induction on $n$; see  Appendix~\ref{sec:append-orbif-euler}.
\end{proof}

\section{Conclusion} \label{sec:conclusion}

In light of Lemma~\ref{lem:t-u-factor}, we may apply~(\ref{eq:13}) to the functor $\mathcal{R}: \OG_g^\mathrm{stat} \to \OG_g^{\mathrm{stat},\mathrm{red}}$ to rewrite the formula in Corollary~\ref{cor:reduction-to-static} as
\begin{align*}
  z_g & = \int_{\OG_g^\mathrm{stat}}  \sss \cdot \ttt \cdot \uuu\\
  & = \int_{\OG_g^{\mathrm{stat},\mathrm{red}}} (\mathcal{R}_* \sss) \cdot \ttt \cdot \uuu.
\end{align*}
We may now replace $\OG_g^{\mathrm{stat},\mathrm{red}}$ by the coproduct in Lemma~\ref{lem:stat-red-coprod}, and observe by Proposition~\ref{prop:u} that $\sss$ and $\ttt$ are constant functions on each $\pi_0(\OG^{\mathrm{stat},\mathrm{red}}_{(g,m,r,s,a,d)})$.  
\begin{equation*}  
  z_g   = \sum_{(k,m,r,s,a,d)} \bigg(\prod_{i=1}^s (-\mu(m/d_i))^{a_i}
         \int_{\OG^{\mathrm{stat},\mathrm{red}}_{(g,m,r,s,a,d)}} \sss \bigg)
         \bigg(P_m^{1 - r} \prod_{i = 1}^s \bigg(\frac{P_{d_i}}{P_m}\bigg)^{a_i} \bigg)
         \bigg(m^{r-1} \prod_{p | D} (1 - p^{-r})\bigg).
\end{equation*}
Combining with the formula in~\ref{lem:orbifold-euler} then finishes the proof of Theorem~\ref{thm:faber-conj}. \qed

\bigskip

We deduce a formula for the ordinary numerical (non-equivariant) top weight Euler characteristic of $\cM_{g, n}$, for large $n$.  This agrees, surprisingly, with the ``orbifold Euler characteristic'' of the category $\J_{g,n}^{\mathrm{p}}$, as computed in Proposition~\ref{prop:chi-with-mark}.  In particular, although this orbifold Euler characteristic is {\em a priori} rational, since objects in $\J_{g,n}^\circ$ have nontrivial automorphisms even when $n$ is large, it is an integer when $n > g + 1$.  We would be interested to have a more conceptual proof of these facts.

\begin{corollary} \label{cor:numerical}
The Euler characteristic of the top weight cohomology of $\cM_{g,n}$ is
\begin{equation}\label{eq:chi0c}
(-1)^{n+1} \frac{(g+n-2)!}{g!} \cdot B_g
\end{equation}
for $n>g+1$.
\end{corollary}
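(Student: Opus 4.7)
The plan is to derive the non-equivariant Euler characteristic from the formula for $z_g$ in Theorem~\ref{thm:faber-conj} by extracting the coefficient of $p_1^n$. The guiding observation is that for any virtual $S_n$-representation $V$ with Frobenius characteristic $z_V$, one has $\dim V = n! \cdot [p_1^n]\bigl(z_V\big|_{p_i = 0,\, i \geq 2}\bigr)$, since the coefficient of $p_1^n$ in $s_\lambda$ equals $\dim V_\lambda/n!$. Applied degree-wise to the graded virtual representation $\Gr^W_{2d}H^*(\cM_{g,n};\QQ)$, this gives
\[
\chi\bigl(\Gr^W_{2d}H^*(\cM_{g,n};\QQ)\bigr) = n! \cdot [p_1^n]\bigl(z_g\big|_{p_i = 0,\, i \geq 2}\bigr).
\]

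First I would analyze which summands of \eqref{eq:main} survive this specialization: under $p_i \mapsto 0$ for $i \geq 2$ we have $P_i \mapsto 1$ for $i \geq 2$ and $P_1 \mapsto 1 + p_1$, so only the factor $P_m^{-k}$ (when $m = 1$) and the factors $P_{d_i}^{a_i}$ (when $d_i = 1$) can contribute powers of $p_1$.

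Next, I would handle the $m = 1$ term. Conditions \eqref{eq:di-divides}--\eqref{eq:chi-upstairs} force $s = 0$, $k = g - 1$, and $r = g$, yielding a single summand which specializes to $-\frac{(g-2)!\,B_g}{g!}(1+p_1)^{-(g-1)}$. Extracting the coefficient of $p_1^n$ via the generalized binomial series and multiplying by $n!$ gives precisely $(-1)^{n+1}\frac{(g+n-2)!}{g!}B_g$, matching \eqref{eq:chi0c}.

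The remaining task, which will be the main obstacle, is to show that every $m \geq 2$ summand contributes zero to $[p_1^n]$ when $n > g + 1$. If $s = 0$ or $d_1 \geq 2$, no $P_1$ factor is present after specialization, so the summand is a scalar and vanishes in $[p_1^n]$ for $n \geq 1$. The essential case is $d_1 = 1$, where $(1+p_1)^{a_1}$ appears; here I would bound $a_1$ using the identity
\[
\sum_i a_i (m - d_i) = m(1-r) + g - 1,
\]
obtained by subtracting $m$ times \eqref{eq:chi-downstairs} from \eqref{eq:chi-upstairs}. Since $m - d_i \geq 1$ for all $i$, this forces $a_1(m-1) \leq m + g - 1 - mr$, hence $a_1 \leq 1 + g/(m-1) \leq g+1$ whenever $m \geq 2$. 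Consequently $(1+p_1)^{a_1}$ has no term in degree $n > g+1$, and the reduction to the $m=1$ computation is complete.
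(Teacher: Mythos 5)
Your proof is correct and takes essentially the same route as the paper's: identify the $m=1$ summand $-\frac{B_g}{g(g-1)}P_1^{1-g}$ as the unique source of negative powers of $P_1$, bound the positive powers of $P_1$ by $g+1$, and extract the coefficient of $p_1^n$. The only difference is that you supply an explicit derivation of the bound $a_1 \leq g+1$ via the identity $\sum_i a_i(m-d_i) = m(1-r)+g-1$, where the paper simply asserts that ``it is easy to see that the maximum exponent of $P_1$ equals $g+1$.''
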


\begin{proof}
The Euler characteristic of the top weight cohomology of $\cM_{g,n}$ is 
\begin{equation}\label{eq:blah}
n! \cdot {\rm coeff}_{p_1^n}\,z_g.
\end{equation}
In the expression for $z_g$ in Theorem~\ref{thm:faber-conj}, the term $-\frac{B_g}{g(g-1)} \cdot P_1^{1-g}$ is the only term with a negative power of $P_1$. 
Moreover, it is easy to see that the
maximum exponent of $P_1$ equals $g+1$. Therefore for $n>g+1$,
\eqref{eq:blah} equals
$$-n! \cdot \frac{B_g}{g(g-1)} \cdot {\rm coeff}_{p_1^n} \,P_1^{1-g}.$$
Since $$P_1^{1-g}=\sum_{n\ge0} (-1)^n {\binom{g+n-2}{g-2}}p_1^n,$$
the corollary follows.
\end{proof}

\begin{remark}\label{rem:connection-to-gorsky-superficial}
In \cite{cgp-markedpoints-published} we considered a cone complex $M_{g,n}^\mathrm{trop}$ glued out of one octant $\RR_{\geq  0}^{E(G)}$ for each object $G \in \J_{g,n}$, quotiented by the action of $\Aut(G)$.  By taking these quotients in an orbifold sense instead, we obtain an orbifold incarnation of $M_{g,n}^\mathrm{trop}$, which ``keeps track of automorphisms'' of tropical curves, in a similar manner to how the orbifold $\cM_{g,n}$ keeps track of automorphisms of complex curves with marked points.  There is a closed sub-orbifold $M_{g,n}^{\mathrm{trop},w}$ made out of tropical curves for which the weighting function $w: V(G) \to \ZZ_{\geq 0}$ is not the zero function, and the number $\sum_{G \in \J_{g,n}^\mathrm{p}} (-1)^{|E(G)|}/|\Aut(G)|$ may be interpreted as $\chi^\mathrm{orb}_c(M_{g,n}^\mathrm{trop} \smallsetminus M_{g,n}^{\mathrm{trop},w})$. If written in this way, instead of evaluated in terms of Bernoulli numbers, and if $z_g$ is written as the $S_n$-equivariant compactly supported Euler characteristic of the coarse space associated to $M_{g,n}^\mathrm{trop}$, relative to the subspace $M_{g,n}^{\mathrm{trop},w}$, the formula in  Theorem~\ref{thm:faber-conj} expresses the $S_n$-equivariant (non-orbifold) Euler characteristics of the pair $(M_{g,n}^\mathrm{trop},M_{g,n}^{\mathrm{trop},w})$ in terms of the orbifold (non-equivariant) Euler characteristics of the same pair.  This is conceptually quite similar to \cite{gorsky-equivariant}, whose main result is a formula for the $S_n$-equivariant (non-orbifold) Euler characteristic of $\cM_{g,n}$ in terms of the orbifold (non-equivariant) Euler characteristic of $\cM_{g,n}$.
\end{remark}

\exnow{\label{ex:g=2}

This is an example illustrating the computation of $z_g$ from the contributions of pairs $(G,\tau)$ given by  Proposition~\ref{prop:start}, in the case $g=2$.  Note that there are only three isomorphism classes in $\J_2^\circ$, namely the theta graph, the dumbbell graph, and the figure 8 graph.

We claim that the contributions of the dumbbell $G'$ and figure 8 $G''$ sum to zero.  To see this, note that every automorphism $\tau'$ of the dumbbell maps the bridge to itself, and hence descends to an automorphism $\tau''$ of the figure 8.  The induced map is an isomorphism $\Aut G' \xrightarrow{\cong} \Aut G''$, and the contribution of $(G',\tau')$ is minus the contribution of $(G'', \tau'')$. 

It remains to compute the contribution of the theta graph $G$, with edges $1,2,3$ and vertices $v,w$. It has 12 automorphisms:

\enumnow{
\item Identity: $\phi_V = (v)(w)$, $\phi_E = (1)(2)(3).$

Contribution: 
$$-\frac{1}{12}\frac{P_1^2P_1^3}{P_1^6} = -\frac{1}{12}\frac{1}{P_1}.$$

\item Exchange 2 edges: $\phi_V = (v)(w)$, $\phi_E = (12)(3)$, 3 in this conjugacy class 

Contribution: 
$$\frac{1}{4} \frac{P_1^2 P_1P_2}{P_1^2P_2^2} = \frac{1}{4} \frac{P_1}{P_2}.$$

\item Cycle 3 edges: $\phi_V = (v)(w)$, $\phi_E = (123)$, 2 in this conjugacy class 

Contribution: $$-\frac{1}{6} \frac{P_1^2 P_3}{P_3^2}=-\frac{1}{6} \frac{P_1^2}{P_3}.$$

\item Flip: $\phi_V = (vw)$, $\phi_E = (1)(2)(3)$, 1 in this conjugacy class 

Contribution: 
$$-\frac{1}{12} \frac{P_2P_1^3}{P_2^3}=-\frac{1}{12} \frac{P_1^3}{P_2^2}.$$

\item Exchange 2 edges and flip: $\phi_V = (vw)$, $\phi_E = (12)(3)$, 3 in this conjugacy class 

Contribution: 
$$\frac{1}{4} \frac{P_2P_1P_2}{P_2^3}=\frac{1}{4} \frac{P_1}{P_2}.$$

\item Cycle 3 edges and flip: $\phi_V = (vw)$, $\phi_E = (123)$, 2 in this conjugacy class 

Contribution: 
$$-\frac{1}{6} \frac{P_2P_3}{P_6}.$$

}

The total
$$z_2 = -\frac{1}{12}\frac{1}{P_1}+\frac{1}{2}\frac{P_1}{P_2}-\frac{1}{6}\frac{P_1^2}{P_3}-\frac{1}{12}\frac{P_1^3}{P_2^2}-\frac{1}{6}\frac{P_2P_3}{P_6}$$
agrees with Theorem~\ref{thm:faber-conj}.
}

\section{Genus 0 and 1}
\label{sec:genus-0-1}

We recall (see \cite[Lemma~3.6]{sundaram-plethysm}) the following formula for Frobenius characteristics of induced representations of $S_n$.
\begin{lemma}\label{lem:sundaram}
Let $H\le S_n$ and $W$ a representation of $H$.  Then the Frobenius characteristic of $\mathrm{Ind}^{S_n}_H W$  is
$$\frac{1}{|H|} \sum_{\sigma\in H} \chi_W(\sigma) \cdot \psi(\sigma).$$
\end{lemma}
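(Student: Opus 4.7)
The plan is to prove this by combining the definition of Frobenius characteristic with the standard induced character formula, and then reindexing the resulting double sum.

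First, I would start from the fact, recorded in the preliminaries of the paper, that the Frobenius characteristic of any $S_n$-representation $V$ is
\begin{equation*}
\mathrm{ch}(V) = \frac{1}{n!}\sum_{\sigma \in S_n} \chi_V(\sigma)\,\psi(\sigma),
\end{equation*}
and apply this with $V = \mathrm{Ind}^{S_n}_H W$. Next, I would substitute the standard formula for the character of an induced representation,
\begin{equation*}
\chi_{\mathrm{Ind}^{S_n}_H W}(\sigma) = \frac{1}{|H|}\sum_{\substack{g\in S_n\\ g^{-1}\sigma g\in H}} \chi_W(g^{-1}\sigma g),
\end{equation*}
which is where the factor $1/|H|$ in the claim will come from.

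Combining these two identities gives a double sum over pairs $(\sigma, g)\in S_n\times S_n$ with $g^{-1}\sigma g \in H$. I would then reindex by setting $\tau = g^{-1}\sigma g\in H$ and summing over $(\tau,g)\in H\times S_n$, noting that for each such pair the element $\sigma = g\tau g^{-1}$ is uniquely determined, so the map $(\tau,g)\mapsto (\sigma,g)$ is a bijection between the two index sets. Because $\psi(\sigma)$ depends only on the cycle type of $\sigma$, and cycle type is a conjugation invariant, we have $\psi(g\tau g^{-1})=\psi(\tau)$; similarly $\chi_W(g^{-1}\sigma g)=\chi_W(\tau)$. Hence the sum becomes
\begin{equation*}
\frac{1}{n!\,|H|}\sum_{\tau\in H}\sum_{g\in S_n}\chi_W(\tau)\,\psi(\tau) = \frac{1}{|H|}\sum_{\tau\in H}\chi_W(\tau)\,\psi(\tau),
\end{equation*}
since the inner sum over $g\in S_n$ just produces a factor of $n!$ that cancels the prefactor.

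There is no real obstacle here; the only place to be careful is the bookkeeping that the bijection $(\tau,g)\leftrightarrow (\sigma,g)$ over the appropriate subsets is counted correctly, and that the class-function property of $\psi$ is invoked to transport $\psi(\sigma)$ to $\psi(\tau)$. The argument is essentially a one-line reindexing, so I would present it as such rather than elaborating further.
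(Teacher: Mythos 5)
Your proof is correct, and it is the standard argument. For the record, the paper does not supply its own proof of this lemma; it simply cites Sundaram (\cite[Lemma~3.6]{sundaram-plethysm}). Your derivation — expanding $\mathrm{ch}(\mathrm{Ind}^{S_n}_H W)$ via the power-sum formula for Frobenius characteristic, substituting the induced character formula, and reindexing the double sum over $\{(\sigma,g): g^{-1}\sigma g\in H\}$ by $(\tau,g)=(g^{-1}\sigma g, g)$ using that $\psi$ is a class function — is the natural and essentially unique proof, and matches the one found in the cited source. No gaps.
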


\begin{proof}[Proof of Proposition~\ref{prop:genus0}]
  Recall from \cite{cgp-markedpoints-published} that $\Gr^W_{2d}H^{2d-*}(\cM_{g,n};\QQ) \cong \widetilde{H}_{*-1}(\Delta_{g,n};\QQ))$, where $d = 3g-3+n$ and $\Delta_{g,n}$ is the moduli space of tropical curves of volume one. 
  For $g = 0$ and $n \geq 4$ the space $\Delta_{0,n}$ is a shellable simplicial complex, homotopy equivalent to a wedge sum of $(n-2)!$ spheres of dimension $n-4$. (It is a point for $n = 3$, and empty for $n < 3$.)  In \cite{robinson-whitehouse-tree}, Robinson and Whitehouse show that the character of the $S_n$-action on $\widetilde{H}_{n-4}(\Delta_{0,n};\QQ) \cong \Gr^W_{2n-6}H^{n-3}(\cM_{0,n};\QQ)$ is equal to
$$\epsilon \cdot (\mathrm{Ind}^{S_n}_{S_{n\!-\!1}} \mathrm{Lie}_{n\!-\!1} - \mathrm{Lie}_{n})$$
where $\epsilon$ denotes the alternating character, and $\mathrm{Lie}_n$ is the character of the $S_n$-action on the component of the free Lie algebra on $x_1,\ldots,x_n$ spanned by words involving each generator exactly once. Recall that $\mathrm{Lie}_{n} = \mathrm{Ind}^{S_n}_{C_n} \rho_n$ for $\rho_n$ any primitive character of $C_n := \ZZ/n\ZZ$, i.e., one sending the generator $1\in \ZZ/n\ZZ$ to a primitive $n^{\text{th}}$ root of unity.  Writing 
$\mathrm{ch}$ for Frobenius characteristic,
we deduce the following formulas by applying Lemma~\ref{lem:sundaram} and using that $\mathrm{ch}(V)$ and $\mathrm{ch}(\epsilon \cdot V)$ are related by the involution of $\widehat\Lambda$ given by $p_a \mapsto (-1)^{a-1} p_a$.
\begin{eqnarray*}
\mathrm{ch}(\mathrm{Lie}_n) &=& \frac{1}{n} \sum_{d|n}  \mu(d)\cdot p_d^{n/d}\\
\mathrm{ch}(\epsilon\cdot \mathrm{Lie}_n) &=&(-\!1)^n\cdot \frac{1}{n}\sum_{d|n}  \mu(d)\cdot(-p_d)^{n/d}\\
\mathrm{ch}(\mathrm{Ind}^{S_n}_{S_{n\!-\!1}}\mathrm{Lie}_{n-1}) &=& \frac{1}{n\!-\!1} \sum_{d|(n\!-\!1)} \mu(d)\cdot p_1p_d^{(n\!-\!1)/d} \\
\mathrm{ch}(\epsilon\cdot \mathrm{Ind}^{S_n}_{S_{n\!-\!1}}\mathrm{Lie}_{n-1}) &=& (-\!1)^{n-1}\cdot\frac{1}{n\!-\!1}\sum_{d|(n\!-\!1)} \mu(d) \cdot p_1(-p_d)^{(n\!-\!1)/d} .
\end{eqnarray*}
By the Robinson--Whitehouse result we therefore get
\begin{eqnarray*}
  z_0 &=& \sum_{n\ge 3} (-1)^{n-3} \mathrm{ch} \left(
          \epsilon\cdot (\mathrm{Ind}^{S_n}_{S_{n\!-\!1}} \mathrm{Lie}_{n\!-\!1} - \mathrm{Lie}_n)
          \right)\\
&=& \sum_{n\ge 3}\left(\frac{1}{n\!-\!1}\! \sum_{d|(n\!-\!1)}  \mu(d) \cdot p_1(-\!p_d)^{(n\!-\!1)/d} +\frac{1}{n} \sum_{d|n} \mu(d) \cdot (-\!p_d)^{n/d} \right)\\
&=& \left(\sum_{d\ge 1} \sum_{i\ge 1} (1+p_1) \frac{\mu(d)}{d} \frac{(-\!p_d)^i}{i} \right) + p_1 + \frac{p_1^2-p_2}{2}.
\end{eqnarray*}
The last equality follows from switching the order of summation between $d$ and $n$ and writing $(n-1)/d$ and $n/d$ as $i \in \ZZ_{\geq 1}$; the last two terms are correction terms ensuring equality in degrees below $3$.  This expression then reduces to the one in Proposition~\ref{prop:genus0}. 
\end{proof}

\begin{proof}[Proof of Theorem~\ref{prop:genus1}]
By \cite{cgp-markedpoints-published}, for each $n\ge 3$, the space $\Delta_{1,n}$ has the homotopy type of a wedge of $(n-1)!/2$ spheres of dimension $n-1$. (It is easy to see that $\Delta_{1,1}$ and $\Delta_{1,2}$ are contractible.) Moreover, for $n\ge 3$, the homology $\widetilde{H}_{n-1}(\Delta_{1,n}; \QQ)$ as an $S_n$-representation is isomorphic to 
\[
\mathrm{Ind}_{D_n,\phi}^{S_n} \, \mathrm{Res}^{S_n}_{D_n,\rho} \, \mathrm{sgn}.
\] 
\noindent Here, $\phi\colon D_n \rightarrow S_n$ is the dihedral group of order $2n$ acting on the vertices of an $n$-gon, $\rho\colon D_n \rightarrow S_n$ is the action of the dihedral group on the edges of the $n$-gon, and $\mathrm{sgn}$ denotes the alternating representation of $S_n$.
Then we calculate $z_1$ as follows.
\begin{eqnarray*}
z_1 &=&\sum_{n\ge 3} (-1)^n \mathrm{ch} \left(\mathrm{Ind}_{D_n,\phi}^{S_n} \, \mathrm{Res}^{S_n}_{D_n,\rho} \, \mathrm{sgn}\right)\\
&=&\frac{1}{2}\sum_{n\ge3} (-1)^n\cdot \frac{1}{n}\left(\sum_{d|n} (-1)^{d+1}\phi(d)\cdot(-p_d)^{n/d}\right) -\frac{1}{2}\sum_{n=2k+1} \! (-1)^{k} p_1p_2^k\\
\vspace{.5cm}\\
&+& \frac{1}{4}\sum_{n=2k+2} (-1)^{k+1} p_1^2 p_2^k \,\, + \frac{1}{4} \sum_{n=2k+2}  (-1)^{k} p_2^{k+1}.
\end{eqnarray*}
We pause to explain each of the four terms. The first term arises from rotations $\sigma$ of a regular $n$-gon that induce a permutation on vertices of $n/d$ cycles of length $d$. There are $\phi(d)$ such rotations for each $d$ dividing $n$. The sign of $\sigma$, as a permutation on {\em edges}, is $(-1)^{(d+1)n/d}.$  (It happens that the sign of $\sigma$ as a permutation on vertices is the same, but this will not be the case in the last two calculations below.) 

The second term arises from the $n$ reflections $\sigma$ of a regular $n$-gon, for $n=2k+1\ge 3$ odd.  The sign of $\sigma$ as a permutation of edges is $(-1)^{k}$, whereas regarding $\sigma$ as a permutation of vertices yields $\psi(\sigma) = p_1p_2^k$.

The third term arises from the $n/2$ reflections $\sigma$ of a regular $n$-gon, for $n=2k+2\ge 4$ even, about an axis that passes through two vertices.  The sign of $\sigma$ as a permutation of edges is $(-1)^{k+1}$, whereas regarding $\sigma$ as a permutation of vertices yields $\psi(\sigma)=p_1^2p_2^k$.

The fourth term arises from the $n/2$ reflections $\sigma$ of a regular $n$-gon, for $n=2k+2\ge 4$ even, about an axis that passes through two sides.  The sign of $\sigma$ as a permutation of edges is $(-1)^{k}$, whereas regarding $\sigma$ as a permutation of vertices yields $\psi(\sigma)=p_2^{k+1}$.

Summing over $d$ and $k$ instead of $n$, the expression for $z_1$  becomes
$$-\frac{1}{2}\sum_{d\ge1} \frac{\phi(d)}{d}\log(1+p_d) + \left(-\frac{1}{4}-\frac{p_1}{2}-\frac{p_1^2}{4}\right)\sum_{k\ge 0}(-p_2)^k + \frac{1}{4} + p_1.$$
The terms $\frac{1}{4} + p_1$ simply arise as correction terms to ensure equality in degree below $3$.  This yields the claimed expression in Proposition~\ref{prop:genus1}.
\end{proof}

\section{Appendix: Orbifold Euler characteristics}
\label{sec:append-orbif-euler}

\propnow{\label{prop:chi-with-mark}
For each $(g,n)$ with $2g-2+n>0$,
\begin{equation}\label{eq:chi-with-mark}
\sum_{G\in \J_{g,n}^{\mathrm{p}}} \frac{(-1)^{|E(G)|}}{|\!\Aut G|} = (-1)^{n+1} \frac{(g+n-2)!}{g!} \cdot B_g.
\end{equation}
}
\begin{proof}
  Let us write $\chi^\mathrm{orb}_{g,n}=\sum_{G\in \J_{g,n}^{\mathrm{p}}} \frac{(-1)^{|E(G)|}}{|\!\Aut G|}$ for brevity.  We first treat the case $g \geq 2$, where the case $n = 0$ is due to Kontsevich \cite{kontsevich-formal}:
$$\chi^\mathrm{orb}_{g,0} = -\frac{B_g}{g(g-1)}.$$
See also \cite[\S7.1]{gerlits-euler}.

 We prove the general case by induction on $n$.  Assume $n \geq 1$.  We must show that $\chi^{\rm orb}_{g,n}  = (2-g-n)\cdot \chi^{\rm orb}_{g,n-1}$.
 
Let $A(g,n,i)$ be the set of graphs $G\in \J_{g,n}^{\circ}$ with $i$ oriented edges, together with a total ordering the edges, up to isomorphism of all this structure.  Then 
$$\chi^{\rm orb}_{g,n}  = \sum_{i} (-1)^i \cdot \frac{|A(g,n,i)|}{2^i i!}.$$
Now let $A(g,n,i)= A^-(g,n,i) \coprod A^+(g,n,i),$ where $ A^-(g,n,i)$ is the subset of isomorphism classes in which forgetting the $n^{\rm{th}}$ marked point results in an unstable graph. That is, the vertex supporting the $n^{\rm{th}}$ marked point is either 2-valent and has no other markings, or is 1-valent and has exactly one additional marking.  We will compute the sizes of $A^-(g,n,i)$ and $A^+(g,n,i)$ in terms of the sizes of $A(g,n\!-\!1,i\!-\!1)$ and $A(g,n\!-\!1,i)$, respectively.

First, we claim $$|A^+(g,n,i)| = (1-g+i)\cdot |A(g,n\!-\!1,i)|,$$
since the map $A^+(g,n,i)\to A(g,n\!-\!1,i)$ forgetting the $n^{\rm {th}}$ marked point has fibers of size $1-g+e,$ the number of vertices of a graph of genus $g$ and $e$ edges.

Second, we claim $$|A^-(g,n,i)| = ((i\!-\!1) + (n\!-\!1)) \cdot \frac{2^i i!}{2^{i-1} (i-1)!} |A(g,n\!-\!1,i\!-\!1)|.$$
Indeed, first ignore the orientations and ordering of edges: then to get a graph in $A^-(g,n,i)$ from one in $A(g,n-1,i\!-\!1)$, there are $i-1$ midpoints of edges at which the $n^{\rm{th}}$ vertex could be placed, and $n-1$ ways it could placed onto a $1$-valent vertex with precisely one other marking.  The other factor accounts for the different number of orientations and orderings of edges.

Therefore $$|A(g,n,i)| = ((i\!-\!1) + (n\!-\!1))\cdot \frac{2^i i!}{2^{i-1} (i\!-\!1)!} |A(g,n\!-\!1,i\!-\!1)| + (1\!-\!g\!+\!i) |A(g,n\!-\!1,i)|.$$

Now multiply by $(-1)^i/(2^i i!)$ and sum over all $e$, reindexing the first of the two right hand by replacing $i-1$ by $i$.  This shows that $\chi^{\rm orb}_{g,n}  = (2-g-n)\cdot \chi^{\rm orb}_{g,n-1}$, as required.

The cases $g = 0$ and $g = 1$ are proved by the same method, except the induction must start with $n = 3$ and $n=1$, respectively.  But that can be done by inspection: up to equivalence $\J_{0,3}^p$ has just the ``tripod'' as  single object, with only the identity automorphism, while $\J_{1,1}^p$ has just the ``marked loop'' as its single object, with automorphism group $\ZZ/2\ZZ$.  This fits with $B_0 = 1$ and $B_1 = -\frac12$.
\end{proof}

\bibliographystyle{amsalpha}
\bibliography{my}

\end{document}